\documentclass[11pt]{amsart}
\usepackage{amsmath,amssymb,amsthm,amsfonts,setspace,hyperref,dsfont,yhmath}

\newcommand{\NN}{\mathbb{N}}
\newcommand{\RR}{\mathbb{R}}

\newcommand{\CC}{\mathbb{C}}

\newcommand{\ZZ}{\mathbb{Z}}

\newcommand{\norm}[1]{\lVert#1\rVert}
\newcommand{\abs}[1]{\lvert#1\rvert}

\newtheorem{theorem}{Theorem}[section]
\newtheorem{corollary}[theorem]{Corollary}

\newtheorem{lemma}[theorem]{Lemma}
\newtheorem{proposition}[theorem]{Proposition}

\newcommand{\comment}[1]{}
\theoremstyle{definition}
\newtheorem{definition}[theorem]{Definition}
\newtheorem{remark}[theorem]{Remark}

\numberwithin{equation}{section}

\makeatletter
\renewcommand*\env@matrix[1][*\c@MaxMatrixCols c]{%
  \hskip -\arraycolsep
  \let\@ifnextchar\new@ifnextchar
  \array{#1}}
\makeatother
\begin{document}
\title[Twisted geodesic flow]
{The twisted cohomological equation over the geodesic flow}

\author{ Zhenqi Jenny Wang$^1$ }
\thanks{ $^1$ Based on research supported by NSF grant   DMS-1700837}
\address{Department of Mathematics\\
        Michigan State University\\
        East Lansing, MI 48824,USA}
\email{wangzq@math.msu.edu}

\begin{abstract}
We study the twisted cohomoligical equation over the geodesic flow on $SL(2,\RR)/\Gamma$. We characterize the obstructions to solving the twisted
cohomological equation, construct smooth solution and obtain the tame
Sobolev estimates for the solution, i.e, there is finite loss of regularity (with
respect to Sobolev norms) between the twisted coboundary and the solution. We also give a tame splittings for non-homogeneous cohomological equations. The result can be viewed as a
first step toward the application of KAM method in obtaining differential rigidity for partially hyperbolic actions in products of rank-one groups in future works.
\end{abstract}

\maketitle
\section{Introduction}

\subsection{Motivation and results} The cohomological equations of
the hococycle flow and the geodesic flow of the
homogeneous spaces of $SL(2,\RR)$ have been well understood, see \cite{Forni} and \cite{Mieczkowski}. In this paper, we extend the study to the twisted cohomological equation of the geodesic flow.

In fact, the study of the twisted  cohomological equation provides a tool for obtaining local differentiable rigidity of algebraic actions by KAM type iteration
scheme. The KAM scheme was used by Damjanovic and Katok to prove local rigidity for genuinely higher-rank partially hyperbolic actions on torus in \cite{Damjanovic4}.
Later, an improved version of the scheme was applied on homogeneous space of $SL(2,\RR)\times SL(2,\RR)$ to obtain weak local rigidity for certain
parabolic algebraic actions \cite{DK-parabolic}. To carry out the scheme, people need to solve the
linearized equation:
\begin{align*}
 \text{Ad}(\alpha)\Lambda-\Lambda\circ\alpha=0
\end{align*}
over the algebraic action $\alpha$, where $\Lambda$ is valued on the tangent space of the homogeneous space. The equation decomposes into the twisted cohomological equations of the form
\begin{align}\label{for:7}
 \mu\Lambda_i-\Lambda_i\circ\alpha=0
\end{align}
on the $\mu$-eigenspace of $\text{Ad}(\alpha)$. Hence a complete and detailed description of twisted cohomological obstructions for the action $\alpha$ is necessary for the scheme.

In this paper, we give a complete solution to the twisted cohomological equation over the geodesic flow. We construct of the solution to the twisted coboundary equation, classify the
the obstructions and obtain tame estimates of the solution. The results in the present paper will be used to prove local differentiable rigidity of the left translations of the two-dimensional
subgroup
\begin{align*}
\begin{pmatrix}e^t & 0 \\
0 & e^{-t} \end{pmatrix}\times \begin{pmatrix}e^s & 0 \\
0 & e^{-s} \end{pmatrix}\times\begin{pmatrix}e^s & 0 \\
0 & e^{-s}  \end{pmatrix}
\end{align*}
where $s,\,t\in\RR$ on $SL(2,\RR)^3/\Gamma$, see \cite{Zhenqi4}.

\subsection{History and method} Results concerning the cohomology of horocycle flow are due to Flaminio and Forni in \cite{Forni}. They used Fourier analysis in each irreducible
unitary representations of $PSL(2,\RR)$ to obtain  Sobolev estimates of the
cohomological equation. These estimates satisfy a uniform upperbound condition, across the class of irreducible representations. Global estimates were then formed by
glueing estimates together from each irreducible component. This scheme was further used
in \cite{T1} to study the cohomological equation of
the classical (discrete) horocycle map, 
and it was also used in \cite{Mieczkowski} to study the cohomological equation of
the classical geodesic flow.

In this paper, we follow the same general scheme as in \cite{Forni} to study the twisted cohomological equation over the geodesic flow. In earlier papers, the obstructions to solving the
equation can be constructed explicitly, which provides distributional solutions by Green's function. For the twisted equations, the obstructions are much more complex, which results in explicit construction is mostly likely impossible. This does seem
to require some new techniques for handling it; the same is true in an attempt
at obtaining Sobolev estimates of the solution.

\section{Statement of results}

\subsection{Irreducible representations of $SL(2,\RR)$}\label{sec:1} We choose as generators for $\mathfrak{sl}(2,\RR)$ the elements
\begin{align}\label{for:4}
X=\begin{pmatrix}
  1 & 0 \\
  0 & -1
\end{pmatrix},\quad U=\begin{pmatrix}
  0 & 1 \\
  0 & 0
\end{pmatrix},\quad V=\begin{pmatrix}
  0 & 0 \\
  1 & 0
\end{pmatrix}.
\end{align}
The \emph{Casimir} operator is then given by
\begin{align*}
\Box:= -X^2-2(UV+VU),
\end{align*}
which generates the center of the enveloping algebra of $\mathfrak{sl}(2,\RR)$. The Casimir operator $\Box$
acts as a constant $u\in\RR$ on each irreducible unitary representation space  and its value classifies them into three classes except the trivial representation.
For \emph{Casimir parameter} $\mu$ of $SL(2,\RR)$, let $\nu=\sqrt{1-\mu}$ be a representation parameter. We denote by $(\pi_\nu,\mathcal{H}_\nu)$ or $(\pi_\mu,\mathcal{H}_\mu)$ the following models for the
\begin{enumerate}
  \item principal series ($\nu\in i\RR$);

  \smallskip
  \item complementary series ($\nu\in (-1,1)\backslash 0$);

  \smallskip
  \item the mock discrete series or the principal series ($\nu=0$);

  \smallskip
  \item discrete series representation spaces ($\nu\in \ZZ\backslash 0$).

\end{enumerate}
For the principal series, we also use the notation $(\pi^+_\nu,\mathcal{H}^+_\nu)$ for the  spherical model and $(\pi^-_\nu,\mathcal{H}^-_\nu)$ for the non-spherical model. For the discrete series we also use $(\pi^+_\nu,\mathcal{H}^+_\nu)$ to denote the  upper half-plane model and $(\pi^-_\nu,\mathcal{H}^-_\nu)$ to denote the lower half-plane model.

Any unitary representation $(\pi,\mathcal{H})$ of $SL(2,\RR)$ is decomposed into a direct integral (see \cite{Forni} and \cite{Mautner})
\begin{align}\label{for:1}
\mathcal{H}=\int_{\oplus}\ell(\mu)\mathcal{H}_\mu dS(\mu)
\end{align}
with respect to a positive Stieltjes measure $dS(\mu)$ over the spectrum $\sigma(\Box)$. The
Casimir operator acts as the constant $\mu\in \sigma(\Box)$ on every Hilbert space $\mathcal{H}_\mu$. Here $\ell(\mu)$ is the (at most countable) multiplicity of the
irreducible representation of $SL(2,\RR)$ appearing in $\pi$. We say that \emph{$\pi$ has a spectral gap (of $u_0$)} if $u_0>0$ and $S((0,u_0])=0$ and $\pi$ contains no non-trivial $SL(2,\RR)$-fixed vectors.

In this paper, we only consider unitary representations of $SL(2,\RR)$ with a spectral gap. That is, for complementary series, we assume there is $0<\mathfrak{u}_0<1$ such that $\nu\in (-\mathfrak{u}_0,\mathfrak{u}_0)\backslash 0$. For the proofs involving the discrete series, we only consider the holomorphic case ($\nu\geq 1$) because there is a complex antilinear
isomorphism between two series of the same Casimir parameter, but we list corresponding results for the anti-holomorphic case ($\nu\leq -1$).

\subsection{Statement of the results} For any unitary representation $(\pi,\mathcal{H})$ of $SL(2,\RR)$ let
\begin{align*}
   \mathcal{H}_{X-m}^{-k}&=\{\mathcal{D}\in \mathcal{H}^{-k}: (X-m)\mathcal{D}=0 \}.
\end{align*}
The next result characterizes the obstructions to solving the twisted cohomological equation and obtains Sobolev estimates for the solution.
\begin{theorem}\label{th:1}
Suppose $(\pi,\mathcal{H})$ is a unitary representation of $SL(2,\RR)$ with a spectral gap. For the twisted cohomological equation $(X+m)f=g$, $m\in\RR\setminus 0$ we have
\begin{enumerate}
\item \label{for:17}if $g\in \mathcal{H}$,  the equation has a unique solution $f\in \mathcal{H}$ with
\begin{align*}
 \norm{f}\leq |m|^{-1}\norm{g};
\end{align*}

  \item\label{for:22} if $g\in \mathcal{H}^s$ with $s\geq\frac{\abs{m}}{2}+8$, and $\mathcal{D}(g)=0$ for any $(X-m)$-invariant distribution $\mathcal{D}$, then the equation has a solution $f\in \mathcal{H}^{s-\frac{\abs{m}}{2}-3}$ with estimates
\begin{align*}
\norm{f}_t\leq C_m \norm{g}_{t+\frac{\abs{m}}{2}+3},\qquad t\leq s-\text{\small$\frac{\abs{m}}{2}$}-3;
\end{align*}
  \item\label{for:23} if $g\in \mathcal{H}^s$ with $s\geq\frac{\abs{m}}{2}+8$, and  the equation $(X+m)f=g$ has a solution $f\in \mathcal{H}^{\frac{\abs{m}+10}{2}}_\nu$, then $f\in \mathcal{H}^{s-\frac{\abs{m}}{2}-3}$ with estimates
\begin{align*}
\norm{f}_t\leq C_m \norm{g}_{t+\frac{\abs{m}}{2}+3},\qquad t\leq s-\text{\small$\frac{\abs{m}}{2}$}-3.
\end{align*}
\end{enumerate}
\end{theorem}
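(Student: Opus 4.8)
The plan is to reduce everything to a direct-integral computation over the irreducible components, exactly as in the Flaminio–Forni scheme. By the decomposition \eqref{for:1}, a solution to $(X+m)f = g$ and its Sobolev estimates can be assembled from uniform-in-$\mu$ estimates for the same equation on each irreducible model $(\pi_\nu,\mathcal{H}_\nu)$, provided the bounds depend only on $m$ and the spectral-gap constant $\mathfrak{u}_0$, not on $\nu$. So the whole theorem comes down to analyzing $(X+m)f=g$ inside a single irreducible representation, and then gluing; the gluing step is routine once the uniform estimates are in hand.

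For part \eqref{for:17}, the operator $X$ is skew-adjoint on $\mathcal{H}$ (it is the generator of a one-parameter unitary group, the geodesic flow), so $X+m = X + m\,\mathrm{Id}$ has $\mathrm{Re}\langle (X+m)f, f\rangle = m\norm{f}^2$ for real $m$. Hence $\abs{m}\norm{f}^2 \le \norm{(X+m)f}\,\norm{f}$, which gives $\norm{f}\le\abs{m}^{-1}\norm{g}$ and, since $m\neq 0$, also injectivity; for surjectivity one notes that $-m$ is then in the resolvent set of the skew-adjoint operator $X$, so $(X+m)$ is boundedly invertible on all of $\mathcal{H}$. This already produces the unique $L^2$ solution with the stated norm bound, independently of the representation, so no gluing subtlety arises here.

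The substance is in \eqref{for:22}. Within an irreducible model I would expand $g$ in the standard eigenbasis (for the principal/complementary series, the basis $\{u_n\}$ diagonalizing the compact generator $\Theta = U - V$; for the discrete series, the weight basis), and solve the recursion that $(X+m)f=g$ imposes on the coefficients. The point is that $(X-m)$-invariant distributions $\mathcal{D}\in\mathcal{H}_{X-m}^{-k}$ are precisely the obstructions: pairing the equation with $\mathcal{D}$ forces $\mathcal{D}(g)=0$, and conversely, once all such obstructions vanish, the formal series for $f$ can be shown to converge in $\mathcal{H}^{s-\abs{m}/2-3}$. The loss of $\abs{m}/2+3$ derivatives is the expected phenomenon: the invariant distribution $\mathcal{D}$ lives in $\mathcal{H}^{-\abs{m}/2-\epsilon}$-type spaces (its Sobolev order grows linearly in $\abs{m}$, reflecting that $X\pm m$ resembles a shift by $\abs{m}/2$ in the "quantum number"), so dividing by the symbol of $X+m$ near its near-kernel costs that many derivatives, and one must track this uniformly across $\nu$, including the delicate complementary-series range $\nu\in(-\mathfrak{u}_0,\mathfrak{u}_0)$ where small denominators threaten — this is where the spectral gap is used. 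I expect this uniform estimate in the complementary series (and the matching of constants near $\nu=0$, the mock discrete series) to be the main obstacle; it is the analogue of the hardest case in \cite{Forni} and \cite{Mieczkowski}, now complicated by the twist $m\neq 0$ which breaks the symmetry that made the untwisted obstructions explicit.

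Finally, part \eqref{for:23} is a regularity bootstrap: it does not assert existence but says that any sufficiently regular solution is as regular as \eqref{for:22} predicts. Given a solution $f\in\mathcal{H}^{(\abs{m}+10)/2}$, its existence forces $\mathcal{D}(g)=0$ for every $(X-m)$-invariant distribution $\mathcal{D}$ of order $\le (\abs{m}+10)/2$ — and by the classification of such distributions from the irreducible analysis, for large enough threshold this accounts for \emph{all} of them. Then \eqref{for:22} supplies a solution $\tilde f$ with the claimed Sobolev bound, and $f-\tilde f$ lies in the kernel of $X+m$ on $\mathcal{H}$, which by the skew-adjointness argument in \eqref{for:17} is trivial; hence $f=\tilde f$ and the estimate transfers. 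The only care needed is checking that the regularity threshold $(\abs{m}+10)/2$ is high enough to detect every invariant distribution, which again follows from the Sobolev-order bound on $\mathcal{H}_{X-m}^{-k}$ established in the per-representation analysis.
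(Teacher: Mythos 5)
Your overall route is the paper's: reduce via the direct integral \eqref{for:1}--\eqref{for:51} to irreducible components, solve the weight-coefficient recursion there, identify the obstructions with $(X-m)$-invariant distributions, and glue; and your argument for part \eqref{for:17} (skew-adjointness of $d\pi(X)$, hence $\abs{m}\norm{f}^2\leq\norm{(X+m)f}\,\norm{f}$ and $-m$ in the resolvent set) is a correct variant of Lemma \ref{le:4}, which obtains the same bound through the spectral decomposition of $\pi$ restricted to the geodesic one-parameter subgroup. The genuine gap is in part \eqref{for:22}: everything that actually carries the theorem --- the construction of the basic solutions $f_{\{n\}}$ with $(X+m)f_{\{n\}}=\mathfrak{U}_n$, the inductive coefficient bounds $\abs{b_{n,n-2k}}\norm{u_{n-2k}}/\norm{u_n}\leq C_m\abs{n}^{\frac{\abs{m}+2}{2}}$ proved separately for $\nu\in i\RR$, $\nu\in(-\mathfrak{u}_0,\mathfrak{u}_0)\backslash 0$ and $\nu\in\ZZ^+$ (Proposition \ref{po:1}), and the resulting tame estimates for the solution (Lemma \ref{le:19}) and for the obstruction functionals (Lemma \ref{le:3}) --- is asserted (``the formal series can be shown to converge'', ``one must track this uniformly across $\nu$'') rather than proved. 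The loss $\frac{\abs{m}}{2}+3$ and the order $\frac{\abs{m}+8}{2}$ of the obstructions are outputs of those inductions, not a priori expectations, so as written part \eqref{for:22} is a plan, not a proof.

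Part \eqref{for:23} has a more specific flaw: you deduce from the existence of $f\in\mathcal{H}^{\frac{\abs{m}+10}{2}}$ that every invariant distribution of order at most $\frac{\abs{m}+10}{2}$ annihilates $g$, and then claim that ``by the classification of such distributions'' this accounts for all of them. No such classification (a uniform bound on the Sobolev order of every element of $\mathcal{H}^{-k}_{X-m}$) is proved by you, and none is established in the paper; the hypothesis of part \eqref{for:22} quantifies over all invariant distributions precisely so that no classification is needed. The paper instead argues per component: Proposition \ref{po:2} shows that a solution in $\mathcal{H}^{\frac{\abs{m}+10}{2}}_\nu$ forces the finitely many \emph{constructed} obstructions $\mathcal{D}_{\nu,n}^{\delta,m}(g)$, $n\in\mathcal{S}^\delta_\nu$, to vanish (by approximating $f$ with $\Theta$-finite truncations and using the continuity of $\mathcal{D}_{\nu,n}^{\delta,m}$ from Lemma \ref{le:3}), and then Theorem \ref{th:3}\eqref{for:111} together with the $L^2$-uniqueness of Lemma \ref{le:4} identifies $f$ with the good solution and transfers the estimate. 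Your part (3) should be repaired along these lines: you only ever need the explicitly constructed obstructions to vanish, not all invariant distributions. The same remark bears on the ``routine gluing'' in (2): what gets applied to each $g_\mu$ is the vanishing of the explicit $\mathcal{D}_{\nu,n}^{\delta,m}(g_\mu)$ with bounds uniform in $\mu$, so the per-component obstructions and their estimates must be produced before the direct-integral step can be run.
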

\begin{remark}
In \cite{Ramirez}, Ramirez shows that for the regular representation of $SL(2,\RR)$ the existence of an $L^2$-solution of the cohomological equation $Xf=g$ grantees the existence of an smooth solution if $g$ is smooth.
This is quite different from the twisted case, since the above theorem shows that for the twisted cohomological equation, an $L^2$-solution always exists.
\end{remark}
The next two theorems make a detailed study for the twisted equation in each non-trivial irreducible component of $SL(2,\RR)$. Also, tame splittings are provided for non-homogeneous equations.

Let $I_\nu=2\ZZ$ or $2\ZZ+1$ if $\mu$ parametrizes the principal series, or let $I_\nu=2\ZZ$ if $\mu$ parametrizes the complimentary series or let $I_\nu=[n,\infty]\subset\ZZ^+$ or $I_\nu=[-\infty,n]\subset\ZZ^-$
if $\mu$ parametrizes the holomorphic discrete series.

For any $f=\sum_{k\in I_\nu}f_ku_k\in \mathcal{H}_\nu$ (see Section \ref{sec:4}) and $n\in I_v\backslash 0$, set
\begin{align}\label{for:107}
&f|_n=\left\{\begin{aligned} &\sum_{k\in I_\nu,\,k\geq n}f_ku_k,&\quad &n>0\\
&\sum_{k\in I_\nu,\,k\leq n}f_ku_k,&\quad& n<0.
\end{aligned}
 \right.
\end{align}
Set $\mathcal{S}^+_\nu=\{0,\,2\}$ (resp. $\mathcal{S}^-_\nu=\{-1,\,1\}$) if $\nu\in i\RR$; $\mathcal{S}^+_\nu=\{0,\,2\}$ (resp. $\mathcal{S}^-_\nu=\emptyset$) if $\nu\in (-\mathfrak{u}_0,\mathfrak{u}_0)\backslash 0$; and $\mathcal{S}^+_\nu=\{\nu+1\}$ (resp. $\mathcal{S}^-_\nu=\{\nu-1\}$) if $\nu\in\ZZ^+\cup 0$ (resp. $\nu\in\ZZ^-\cup0$).

\begin{theorem}\label{th:3} Suppose $0\neq m\in\RR$. In any non-trivial irreducible representation $(\pi^\delta_\nu,\mathcal{H}^\delta_\nu)$, $\delta=\pm$, $\nu\in i\RR\cup\ZZ\cup(-u_0,u_0)$ of $SL(2,\RR)$, there exists $\mathcal{D}_{\nu,n}^{\delta,m}\in (\mathcal{H}^\delta_\nu)_{X-m}^{\text{\tiny$-\frac{|m|+8}{2}$}}$ where $\delta=\pm$, $n\in \mathcal{S}^\delta_\nu$ such that for any $g\in (\mathcal{H}^\delta_\nu)^s$, $s\geq\frac{\abs{m}}{2}+8$, $\delta=\pm$ we have
\begin{enumerate}
  \item\label{for:111} the equation
      \begin{align*}
        (X+m)f=g+\sum_{n\in \mathcal{S}^\delta_\nu}\mathcal{D}_{\nu,n}^{\delta,m}(g)u_n
      \end{align*}
      has a solution $f\in (\mathcal{H}^\delta_\nu)^{s-\frac{\abs{m}}{2}-3}$ with estimates
\begin{align*}
\norm{f}_t\leq C_m \norm{g}_{t+\frac{\abs{m}}{2}+3},\qquad t\leq s-\text{\small$\frac{\abs{m}}{2}$}-3.
\end{align*}
Furthermore, if we write $f=\sum_{n\in I_\nu}f_nu_n\in \mathcal{H}_\nu$ we have
\begin{align*}
 &\norm{f_nu_n}_t\leq\left\{\begin{aligned}&C_{m}\norm{\Theta^{\frac{\abs{m}+3}{2}}(g|_{n-2})}_{t+\frac{1}{2}}; &\quad &\text{if }n<0\\
&C_{m}\norm{\Theta^{\frac{\abs{m}+3}{2}}(g|_{n+2})}_{t+\frac{1}{2}},&\quad& \text{if }n> 0,
\end{aligned}
 \right.
\end{align*}
where $\Theta=U-V$, for any $t\leq s-\text{\small$\frac{\abs{m}}{2}$}-3$;
\medskip
  \item \label{for:112} if the equation $(X+m)f=g$ has a solution $f\in \mathcal{H}^{\frac{\abs{m}+10}{2}}_\nu$ then for $n\in \mathcal{S}^\delta_\nu$, $\text{\small$\mathcal{D}_{\nu,n}^{\delta,m}(g)$}=0$;

  \medskip
  \item\label{for:113} for any $g\in (\mathcal{H}^\pm_\nu)^{s}$, $s\geq\frac{\abs{m}+8}{2}$ and $n\in \mathcal{S}^\pm_\nu$ we have
  \begin{align*}
      \norm{\mathcal{D}_{\nu,n}^{\pm,m}(g)u_n}_t\leq C_m\norm{g}_{\frac{\abs{m}+8}{2}+t},
      \end{align*}
 for any $0\leq t\leq s-\frac{\abs{m}+8}{2}$.
\end{enumerate}

\end{theorem}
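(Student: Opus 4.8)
The plan is to work inside a single non-trivial irreducible component $(\pi^\delta_\nu,\mathcal{H}^\delta_\nu)$ and to reduce the twisted equation to a scalar three-term recurrence for the coefficients in the $K$-weight basis $\{u_k\}_{k\in I_\nu}$ of Section~\ref{sec:4}. Since $[\Theta,X]=-2(U+V)$ and $[\Theta,U+V]=2X$ with $\Theta=U-V$, the geodesic generator $X$ shifts $\Theta$-weights by $\pm2$; writing $Xu_k=a^+_ku_{k+2}+a^-_ku_{k-2}$, skew-adjointness forces $a^-_{k+2}=-\overline{a^+_k}$, and the value of the Casimir gives $\abs{a^+_k}^2=\tfrac14\big((k+1)^2-\nu^2\big)$, which is quadratic in $k$ and vanishes at $k=\nu-1$, so that the lowering part of $X$ annihilates the bottom weight $u_{\nu+1}$ of the holomorphic discrete series. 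Hence, for $f=\sum_kf_ku_k$ and $g=\sum_kg_ku_k$, the equation $(X+m)f=g$ is equivalent to
\begin{align*}
\tfrac12\sqrt{(k-1)^2-\nu^2}\,f_{k-2}+mf_k-\tfrac12\sqrt{(k+1)^2-\nu^2}\,f_{k+2}=g_k,\qquad k\in I_\nu,
\end{align*}
and, since the Sobolev norms are the weighted $\ell^2$-norms $\norm{f}^2_t\asymp\sum_k(1+\abs{k}+\abs{\nu})^{2t}\abs{f_k}^2$ and $\Theta u_k=iku_k$, all three assertions are statements about these weighted sequences.

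I would analyse the recurrence by a discrete Green's-function / variation-of-parameters construction; the $L^2$-invertibility of $X+m$ itself is elementary (for $m>0$ it is $\int_0^\infty e^{-mt}e^{-tX}\,dt$). Near each end $k\to\pm\infty$ the homogeneous recurrence has a recessive and a dominant solution with power-type asymptotics whose exponents are of size $\asymp\abs{m}/2$ (the recessive one decays fast enough to be square-summable precisely because $m\neq0$). The resolvent kernel of $X+m$ on $\mathcal{H}^\delta_\nu$ is built from these solutions, and the obstruction functionals $\mathcal{D}^{\delta,m}_{\nu,n}$, $n\in\mathcal{S}^\delta_\nu$, are the (finitely many, at most two) linear functionals of $g$ that multiply the non-recessive homogeneous solutions in that kernel; I \emph{define} them this way and verify below that they are $(X-m)$-invariant distributions of the stated order. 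For the modified right-hand side $\tilde g=g+\sum_{n\in\mathcal{S}^\delta_\nu}\mathcal{D}^{\delta,m}_{\nu,n}(g)u_n$, which is annihilated by all of them, the corresponding solution $f$ has coefficients that, for $n$ beyond the finite ``turning'' range on the positive (resp.\ negative) side, are expressible purely through the one-sided tail $\tilde g|_n$ — the would-be other-side contribution being rewritten using the annihilation relations; this is the source of $g|_{n\pm2}$ in the first assertion, and inserting the homogeneous-solution asymptotics and applying Cauchy--Schwarz yields $\norm{f_nu_n}_t\le C_m\norm{\Theta^{(\abs{m}+3)/2}(g|_{n\pm2})}_{t+1/2}$. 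Summing over $n$ gives the global tame bound $\norm{f}_t\le C_m\norm{g}_{t+\abs{m}/2+3}$, the loss $\abs{m}/2+3$ being the $\asymp\abs{m}/2$ power from the non-recessive solution plus a fixed amount absorbing convergence of the sums. This is the first assertion (for the discrete series there is no $-\infty$ end, and the single obstruction records the defect of the recessive solution at the bottom of the ladder).

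For the remaining two assertions, first check that $\mathcal{D}^{\delta,m}_{\nu,n}$ is a genuine $(X-m)$-invariant distribution: its coefficient sequence solves the recurrence with $m$ replaced by $-m$ (the formal transpose of $X+m$ on distributions being $-(X-m)$), so it is dominated by the non-recessive homogeneous solution, hence by $C_m(1+\abs{k}+\abs{\nu})^{(\abs{m}+6)/2}$, whence $\mathcal{D}^{\delta,m}_{\nu,n}\in(\mathcal{H}^\delta_\nu)^{-(\abs{m}+8)/2}_{X-m}$; an evaluation-pairing argument then shows $\{\mathcal{D}^{\delta,m}_{\nu,n}\}_{n\in\mathcal{S}^\delta_\nu}$ is a basis of this (at most two-dimensional) space. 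The second assertion is then the integration-by-parts identity
\begin{align*}
\mathcal{D}^{\delta,m}_{\nu,n}(g)=\mathcal{D}^{\delta,m}_{\nu,n}\big((X+m)f\big)=-\big((X-m)\mathcal{D}^{\delta,m}_{\nu,n}\big)(f)=0,
\end{align*}
legitimate because $f\in\mathcal{H}^{(\abs{m}+10)/2}_\nu$ pairs with $(X-m)\mathcal{D}^{\delta,m}_{\nu,n}\in\mathcal{H}^{-(\abs{m}+10)/2}_\nu$. For the third, write $\mathcal{D}^{\delta,m}_{\nu,n}(g)=\sum_kd_kg_k$ with $\abs{d_k}\le C_m(1+\abs{k}+\abs{\nu})^{(\abs{m}+6)/2}$ and $d_k=0$ for $k$ in the turning range; Cauchy--Schwarz against $(1+\abs{k}+\abs{\nu})^{\abs{m}+8+2t}$ gives $\abs{\mathcal{D}^{\delta,m}_{\nu,n}(g)}\le C_m(1+\abs{n}+\abs{\nu})^{-t}\norm{g}_{(\abs{m}+8)/2+t}$, which is the third assertion after multiplying by $\norm{u_n}_t=(1+\abs{n}+\abs{\nu})^t$.

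The hard part is the uniform-in-$\nu$ control of the recurrence: proving the power-type asymptotics of the homogeneous solutions, with the induced bounds on the resolvent kernel and the Wronskian, with constants depending only on $m$. This is delicate because $\abs{a^+_k}^2=\tfrac14\big((k+1)^2-\nu^2\big)$ genuinely depends on $\nu$, behaves differently for $\abs{k}\lesssim\abs{\nu}$ than for $\abs{k}\gg\abs{\nu}$, and degenerates at the bottom of the discrete-series ladder, so the asymptotics must be established with explicit $\nu$-dependence and then shown to be absorbable into $C_m$ — the $\nu$-independence being exactly what is needed when these components are later reassembled into Theorem~\ref{th:1}. Pinning down the precise constants $\abs{m}/2+3$ in the loss and $(\abs{m}+8)/2$ in the order of the obstruction distributions is then a matter of carefully tracking the power exponents through the products and sums; this appears to be where the ``new techniques'' alluded to in the introduction enter.
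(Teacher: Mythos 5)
Your skeleton matches the paper's in broad outline: reduce to the three-term recurrence in the $\Theta$-weight basis, exploit the $\abs{k}^{\pm\abs{m}/2}$-type growth/decay of solutions of the recurrence, obtain the obstructions as finitely many functionals attached to the modes in $\mathcal{S}^\delta_\nu$, prove assertion (2) by duality against $(X-m)$-invariance and (3) by Cauchy--Schwarz against the growth of the distribution's coefficients. The organizational difference is that the paper never invokes recessive/dominant asymptotics or a resolvent kernel: it constructs explicit $\Theta$-finite basic solutions $f_{\{n\}}$ by one-sided recursion from the mode $n$ toward the centre, defines $\mathcal{D}_{\nu,n}^{\delta,m}$ in closed form from the resulting coefficients $b_{j,\cdot}$ so that the identity \eqref{for:3} holds by construction (whence (1) via Lemma \ref{le:19}), and proves (2) not by a one-line integration by parts but by the lemma $\mathcal{D}_{\nu,n}^{\delta,m}\big((X+m)u_k\big)=0$ (using the $\Theta$-finiteness/uniqueness observation) followed by a truncation-and-limit argument using Lemma \ref{le:3}; that approximation argument is exactly what makes your pairing step with $f\in\mathcal{H}^{\frac{\abs{m}+10}{2}}_\nu$ legitimate, so you would have to supply it anyway.

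There are, however, two genuine gaps. First, all of the quantitative content of the theorem --- the loss $\frac{\abs{m}}{2}+3$ in (1), the order $\frac{\abs{m}+8}{2}$ in (3), and above all the $\nu$-uniformity of $C_m$ across the principal, complementary and discrete series --- rests on the ``power-type asymptotics of the homogeneous solutions with constants depending only on $m$,'' which you yourself flag as the hard part and then defer. That step is where the paper spends most of its effort: the inductive bounds of Lemmas \ref{le:5}, \ref{le:6} and \ref{le:9}, proved separately for the three series precisely because the coefficient $(k+1)^2-\nu^2$ behaves differently for $\abs{k}\lesssim\abs{\nu}$ and $\abs{k}\gg\abs{\nu}$ and degenerates at the bottom of the discrete-series ladder, culminating in Proposition \ref{po:1}; generic Poincar\'e--Perron/Birkhoff--Adams asymptotics give exponents for each fixed $\nu$ but no uniform constants, so this cannot be waved through. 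Second, your definition of the obstructions is not yet coherent: the kernel built from the recessive solutions at both ends inverts $X+m$ boundedly on $\mathcal{H}$ with no obstruction at all (that is Lemma \ref{le:4} and part (1) of Theorem \ref{th:1}), whereas the obstructions of assertion (1) arise for the \emph{tame} solution obtained by one-sided summation, as the defect supported on the central modes $\mathcal{S}^\delta_\nu$ when the two one-sided solutions are glued --- the content of \eqref{for:3}. The phrase ``the functionals that multiply the non-recessive homogeneous solutions in that kernel'' conflates these two constructions; without an explicit defect description you can neither count the obstructions (matching them bijectively with $\mathcal{S}^\delta_\nu$) nor run the estimates in (1) and (3) with the stated exponents.
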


The case of $m=0$ is proved in \cite{Mieczkowski}, see Theorem 4.1 and Theorem 4.2.

\begin{theorem}\label{th:7}
Suppose $g\in (\mathcal{H}_\nu)^s$, $s\geq\frac{\abs{m}}{2}+8$, $0\neq m\in\RR$. For any $n\in I_\nu\backslash 0$ with $\abs{n}\geq \abs{\text{Re}(\nu)}+2$ there exists
\begin{align*}
 &\tilde{g}=\left\{\begin{aligned}&a_1u_n+a_2u_{n-2}; &\quad &\text{if }n>0\\
&b_1u_n+b_2u_{n+2},&\quad& \text{if }n<0,
\end{aligned}
 \right.
 \end{align*}
where $a_1,a_2,b_1,b_2\in\CC$ such that the equation
\begin{align*}
 (X+m)f=g|_n-\tilde{g}
\end{align*}
has a solution $f\in (\mathcal{H}_\nu)^{s-\frac{\abs{m}}{2}-3}$ such that $f=f|_n$ with estimates
 \begin{align*}
 \norm{f}_t\leq\norm{g|_n}_{t+\frac{\abs{m}}{2}+3},\qquad t\leq s-\text{\small$\frac{\abs{m}}{2}$}-3.
 \end{align*}
 Moreover, we have
 \begin{align*}
\norm{\tilde{g}}_t\leq C_m\norm{g|_n}_{t+\frac{\abs{m}}{2}+4}
\end{align*}
if $t\leq s-\text{\small$\frac{\abs{m}}{2}$}-4$.
\end{theorem}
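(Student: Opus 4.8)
The plan is to translate the whole statement into a three--term recursion in the weight basis $\{u_k\}_{k\in I_\nu}$ of the irreducible model $\mathcal H_\nu$, and then to solve and estimate that recursion. Write $Xu_k=\alpha_ku_{k+2}+\beta_ku_{k-2}$; the coefficients grow linearly in $k$ and vanish only at the bottom of a discrete series, so the hypothesis $\abs{n}\ge\abs{\mathrm{Re}(\nu)}+2$ is exactly what guarantees $\alpha_k\neq 0\neq\beta_k$ for all $k$ with $\abs{k}\ge\abs{n}$. Take $n>0$ (the case $n<0$, and the antiholomorphic discrete series, follow by the obvious symmetry). Imposing $f=f|_n$ and matching $(X+m)f=g|_n-\tilde g$ coefficient by coefficient produces the genuine equations
\[
\alpha_{k-2}f_{k-2}+mf_k+\beta_{k+2}f_{k+2}=g_k\qquad(k\ge n+2),
\]
while the equations at $k=n$ and $k=n-2$ only record $a_1=g_n-mf_n-\beta_{n+2}f_{n+2}$ and $a_2=-\beta_nf_n$. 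Thus the theorem is equivalent to producing a solution $\{f_k\}_{k\ge n}$ of the displayed recursion lying in $\mathcal H_\nu^{\,s-\frac{\abs{m}}{2}-3}$, estimating it by $g|_n$, and reading off $\tilde g$ from its boundary values.

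Existence and the $L^2$ bound come for free from a compression argument. Let $P$ be the orthogonal projection onto $\mathcal H^{(n)}_\nu:=\overline{\mathrm{span}}\{u_k:k\ge n\}$ and set $X_n=P(X+m)P$ on $\mathcal H^{(n)}_\nu$. Because $\pi(X)$ is skew--adjoint and $P^*=P$, one gets $X_n^*=-X_n+2m$, hence $\mathrm{Re}\,\langle X_nv,v\rangle=m\norm{v}^2$; therefore $X_n$ and $X_n^*$ are injective with closed range, $X_n$ is invertible, and $\norm{X_n^{-1}}\le\abs{m}^{-1}$. Solving $X_nf=g|_n$ yields $f\in\mathcal H^{(n)}_\nu$ with $\norm{f}\le\abs{m}^{-1}\norm{g|_n}$ and $(X+m)f=g|_n-\tilde g$, $\tilde g\in\mathrm{span}\{u_n,u_{n-2}\}$; this is precisely the square--summable (``decaying at $+\infty$'') solution of the recursion, hence the one whose smoothness must be established.

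The Sobolev control of $f$ is where I expect the real obstacle. In the twisted case the two fundamental solutions of the recursion are no longer hypergeometric, so they must be analyzed asymptotically and uniformly in $\nu$. The limiting recursion has characteristic roots of modulus one, so one is in the exceptional Poincar\'e--Perron regime: the subprincipal terms, read off from the expansion $\alpha_{k-2}/\beta_{k+2}=1+\frac{c(\nu)}{k}+O(k^{-2})$, produce explicit power corrections to the solutions and an explicit power growth of the Casoratian $W_k$. Two regimes appear. For $\abs{k}\gtrsim\abs{m}$ the term $mf_k$ is a lower--order perturbation, the operator is essentially first order, and after peeling off the oscillatory factor it reduces to a transport equation whose oscillation permits an extra integration by parts, so that $f_k$ gains a derivative over $g_k$. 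For $n\le\abs{k}\lesssim\abs{m}$ the linearly growing $\alpha,\beta$ are dominated by $m$, so this is a tunnelling window across which one fundamental solution grows and the other decays; matching across it is what produces the finite loss $\tfrac{\abs{m}}{2}+3$ of derivatives and the $m$--dependent constant in the $\tilde g$--estimate.

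Concretely I would: (i) construct the two fundamental solutions $h^{\pm}$ by a convergent discrete WKB iteration and bound $h^{\pm}$, their ratio, and $W_k$; (ii) represent $f$ through the two--sided Green kernel assembled from $h^{\pm}$ and $W_k$; (iii) estimate $\norm{f_ku_k}_t$ against the weighted tails $\norm{\Theta^{\frac{\abs{m}+3}{2}}(g|_{k\pm2})}_{t+\frac12}$ and sum over $k$ to reach $\norm{f}_t\le\norm{g|_n}_{t+\frac{\abs{m}}{2}+3}$ for $t\le s-\frac{\abs{m}}{2}-3$; (iv) substitute the resulting bounds on $f_n,f_{n+2}$, using $\beta_n,\beta_{n+2}=O(n)$, into $a_1,a_2$ to obtain $\norm{\tilde g}_t\le C_m\norm{g|_n}_{t+\frac{\abs{m}}{2}+4}$. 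The main difficulty is (i)--(ii): extracting the decay and growth rates of $h^{\pm}$ and the Green--kernel bounds from the subprincipal data in this ``no spectral gap'' situation, uniformly in the representation parameter and with honest dependence on $m$. (A softer route for existence and for the $\tilde g$--bound is to apply Theorem~\ref{th:3} to $g|_n$ and take $f=F|_n$, noting that the obstruction terms sit at weights $<n$ and so do not interfere; this yields the same $\tilde g$, but only gives the $f$--estimate with a constant $C_m$ in place of the sharp $1$.)
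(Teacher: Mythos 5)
Your primary route is not yet a proof of the theorem. The compression argument does produce an $L^2$ solution supported on the modes $k\ge n$ (modulo a domain point you pass over: the identity $X_n^*=-X_n+2m$ for an unbounded compression requires knowing that the minimal and maximal half-line tridiagonal operators coincide; here this is rescuable by a Carleman/limit-point argument because the off-diagonal coefficients grow like $k/2$, but it is not automatic from $P^*=P$). However, everything the theorem asserts beyond $L^2$ --- the solution lying in $(\mathcal H_\nu)^{s-\frac{\abs m}{2}-3}$ with the loss of $\frac{\abs m}{2}+3$ derivatives, and the bound on $\tilde g$ --- is left as a program in your steps (i)--(iii), which you yourself flag as the main difficulty. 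Those estimates are exactly the quantitative heart of the paper (the upper bounds on $\abs{b_{n,n-2k}}\norm{u_{n-2k}}/\norm{u_n}$ in Proposition \ref{po:1}, the norm bound on the basic solutions in Proposition \ref{le:11}, and the summation \eqref{for:108}), and the WKB/Green-kernel sketch does not substitute for them.

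On the other hand, the ``softer route'' you relegate to a parenthesis is in substance the paper's own proof (Section \ref{sec:6}): take the basic-solution expansion $F=\sum_\ell g_\ell f_{\{\ell\}}$ for $g|_n$ (equivalently the solution furnished by Theorem \ref{th:3}), set $f=F|_n$, note that by the three-term structure the coefficient of $(X+m)f$ at every mode $\abs k\ge\abs n+2$ agrees with $g_k$ (the obstruction modes $\mathcal S^\delta_\nu$ never reach that range since $\abs n\ge\abs{\mathrm{Re}(\nu)}+2$), so the error $\tilde g=g|_n-(X+m)f$ sits on $\{u_n,u_{n\mp2}\}$ with the explicit coefficients the paper records as $\mathcal E^{\delta,m}_{\nu,n,1},\mathcal E^{\delta,m}_{\nu,n,2}$, and finally bound $\norm{\tilde g}_t\le\norm{g|_n}_t+C_m\norm{f}_{t+1}$, which costs the one extra derivative in the last estimate. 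Your stated reason for demoting this route --- that it yields a constant $C_m$ rather than the constant $1$ in the statement --- is not a genuine obstruction: the paper's own inequality \eqref{for:109} is deduced from \eqref{for:108}, which carries a constant $C_{m,\delta}$, so the constant-free form in the statement is a slip of the paper and $C_m$ is what is actually proved there as well. So the correct repair is to promote your parenthesis to the proof (writing out the two boundary coefficients and the $\tilde g$ estimate), and either drop the WKB program or develop it fully if you want an independent derivation of the Sobolev bounds.
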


\section{Preliminaries on representation theory of $SL(2,\RR)$}

\subsection{Sobolev spaces}\label{sec:17}  As in Flaminio-Forni \cite{Forni}, the
Laplacian gives unitary representation spaces a natural Sobolev structure. Let $\pi$ be a unitary representation of $SL(2,\RR)$  on a
Hilbert space $\mathcal{H}$. The Sobolev space of order $s>0$ is the Hilbert space $\mathcal{H}^s\subset \mathcal{H}$ that is the maximal domain determined by the inner product
\begin{align*}
\langle v_1,\,v_2\rangle_s=\langle (I+\Delta^s)v_1,\,v_2\rangle
\end{align*}
for any $v_1,v_2\in \mathcal{H}$.

The subspace $\mathcal{H}^\infty$
coincides with the intersection of the spaces $\mathcal{H}^s$ for all $s\geq 0$. $\mathcal{H}^{-s}$, defined as the Hilbert space duals of
the spaces $\mathcal{H}^{s}$, are subspaces of the space $\mathcal{E}(\mathcal{H})$ of distributions, defined as the
dual space of $\mathcal{H}^\infty$.

In addition to the decomposition \eqref{for:1}, all the operators in the enveloping algebra are decomposable with respect to the direct integral decomposition \eqref{for:1}. Hence there exists for all $s\in\RR$ an induced direct
decomposition of the Sobolev spaces:
\begin{align}\label{for:51}
\mathcal{H}^s=\int_{\oplus}\ell(\mu)\mathcal{H}^s_\mu dS(\mu)
\end{align}
with respect to the measure $dS(\mu)$ (we refer to
\cite[Chapter 2.3]{Zimmer} or \cite{Margulis} for more detailed account for the direct integral theory).

The existence of the direct integral decompositions
\eqref{for:1}, \eqref{for:51} allows us to reduce our analysis of the
cohomological equation to irreducible unitary representations. This point of view is
essential for our purposes.

\subsection{Sobolev norms}\label{sec:4} There exists an orthogonal basis $\{u_k\}$ in $\mathcal{H}_\nu$, basis of eigenvectors of the operator $\Theta=U-V$ and hence of the Laplacian operator $\Delta=\Box-2\Theta^2$, satisfying:
\begin{align}\label{for:110}
 \Theta u_k=iku_k,\qquad \Delta u_k=(\mu+2k^2)u_k;
\end{align}
and the norms of the $u_k$ are given recursively by
\begin{align}\label{for:18}
&\norm{u_k}^2=\left\{\begin{aligned} &\norm{u_{k-2}}^2,&\quad &\nu\in i\RR\\
&\frac{\abs{k}-1-\nu}{\abs{k}-1+\nu}\norm{u_{k-2}}^2,&\quad& \nu\in\RR,
\end{aligned}
 \right.
\end{align}
see \cite{Forni}. By defining $\Pi_{\nu,k}=\Pi_{j=i_\nu}^k\frac{\abs{k}-1-\nu}{\abs{k}-1+\nu}$, for any integer $k\geq i_\nu=1+\text{Re}(\nu)$ (Empty products are set equal to $1$; hence, if $k=i_\nu$, then $\Pi_{\nu,k}=1$ in all cases)  we get that
\begin{align*}
  \norm{u_k}^2=\abs{\Pi_{\nu,\abs{k}}}.
\end{align*}
From Section \ref{sec:17} the Sobolev norms of the vectors of the orthogonal basis $\{u_k\}$ are given by the identities
\begin{align*}
 \norm{u_k}^2_s=\langle (I+\Delta^s)u_k,\,u_k\rangle=(1+\mu+2k^2)\norm{u_k}^2.
\end{align*}
Then the Sobolev norm of a vector $f=\sum_kf_ku_k\in \mathcal{H}^s_\nu$ is:
\begin{align}\label{for:9}
 \norm{f}_{s}=\big( \sum_k(1+\mu+2k^2)^s\abs{\Pi_{\nu,k}}\abs{f_k}^2\big)^{\frac{1}{2}}.
\end{align}
\begin{lemma}\label{le:10}(Lemma 2.1 of \cite{Forni}) If $\nu\in i\RR$, the for all $k\geq0$,
\begin{align*}
\abs{\Pi_{\nu,k}}=1.
\end{align*}
There exists $C>0$ such that, if $\nu\in(-1,1)\backslash 0$ for all $k>0$, we have
\begin{align*}
  C^{-1}\big( \frac{1-\nu}{1+\nu}\big)(1+k)^{-\nu}\leq\Pi_{\nu,k}\leq C\big( \frac{1-\nu}{1+\nu}\big)(1+k)^{-\nu};
\end{align*}
if $\nu\in \ZZ^+$ for all $k\geq \ell\geq i_\nu=\nu+1=n$, we have
\begin{align*}
  C^{-1}\big(\frac{k-n+2}{\ell-n+2}\big)^{-\nu}\leq\frac{\Pi_{\nu,k}}{\Pi_{\nu,\ell}}\leq C\big( \frac{k-n+2\nu}{\ell-n+2\nu}\big)^{-\nu}.
\end{align*}
\end{lemma}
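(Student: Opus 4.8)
The plan is to read all three estimates off the product expansion of $\Pi_{\nu,k}$ produced by the norm recursion \eqref{for:18}. Telescoping $\norm{u_k}^2=\abs{\Pi_{\nu,k}}$ writes $\Pi_{\nu,k}$ (in the notation $m=j-1$) as
\begin{align*}
\Pi_{\nu,k}=\prod_{m}\frac{m-\nu}{m+\nu},
\end{align*}
the product being over the values of $m$ lying, in steps of $2$, between the bottom of the weight spectrum and $k-1$ (empty product when $k=i_\nu$). Each part of the Lemma is then an estimate for a product of this shape, and the three cases differ only in which elementary tool is sharpest. For the principal series $\nu\in i\RR$ one has $\overline{\nu}=-\nu$, hence for every real $m$
\begin{align*}
\overline{\Big(\frac{m-\nu}{m+\nu}\Big)}=\frac{m+\nu}{m-\nu}=\Big(\frac{m-\nu}{m+\nu}\Big)^{-1},
\end{align*}
so each factor lies on the unit circle and $\abs{\Pi_{\nu,k}}=1$ by multiplicativity; that is the entire argument.

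For the complementary series $\nu\in(-1,1)\setminus 0$ all factors are real and positive. I would first split off the factor with $m=1$, which equals $\frac{1-\nu}{1+\nu}$: this accounts for the displayed prefactor and, crucially, absorbs the whole singular behaviour as $\nu\to\pm1$. For the remaining factors ($m\geq 3$) the Taylor series of $\log\frac{1-x}{1+x}$ at $x=\nu/m$ gives $\big|\log\frac{m-\nu}{m+\nu}+\frac{2\nu}{m}\big|\leq\frac{2\abs{\nu}^3}{m^3-\nu^2m}$, whence
\begin{align*}
\Big|\log\Big(\Pi_{\nu,k}\cdot\tfrac{1+\nu}{1-\nu}\Big)+2\nu\!\!\sum_{\substack{m\ \mathrm{odd}\\ 3\leq m\leq k-1}}\!\!\tfrac1m\Big|\ \leq\ 2\abs{\nu}^3\sum_{m\geq 3}\tfrac{1}{m^3-m},
\end{align*}
a bound uniform for $\abs{\nu}\leq 1$. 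Since $m$ runs over odd integers up to $\sim k$, $\sum_m\frac1m=\frac12\log k+O(1)$, and therefore $\log\big(\Pi_{\nu,k}\tfrac{1+\nu}{1-\nu}\big)=-\nu\log k+O(1)$ with the $O(1)$ uniform in $\nu$ and in $k$; using $k^{-\nu}\asymp(1+k)^{-\nu}$ this is exactly the claimed two-sided estimate.

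For the holomorphic discrete series $\nu\in\ZZ^+$, $n=i_\nu=\nu+1$, a logarithmic expansion would not be uniform in $\nu$, so I would instead keep the cancellation exact. In $\Pi_{\nu,k}/\Pi_{\nu,\ell}=\prod_m\frac{m-\nu}{m+\nu}$ (product over the $m$ from $\ell+1$ to $k-1$) substitute $m-\nu=c+2i$; since $2\nu$ is an even integer the denominators $c+2\nu+2i$ form the same arithmetic progression shifted by $\nu$ terms, so after the set-theoretic cancellation one obtains the closed form
\begin{align*}
\frac{\Pi_{\nu,k}}{\Pi_{\nu,\ell}}=\prod_{j=0}^{\nu-1}\frac{(\ell-n+2)+2j}{(k-n+2)+2j},
\end{align*}
a ratio of two rising factorials with exactly $\nu$ factors, valid for all $k\geq\ell$ in $I_\nu$. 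Each factor is $\leq 1$ because $\ell\leq k$, and the map $j\mapsto\frac{a+2j}{b+2j}$ with $a\leq b$ is nondecreasing, so the product is caught between its value at $j=0$, namely $\big(\frac{\ell-n+2}{k-n+2}\big)^{\nu}$, and its value at $j=\nu-1$, namely $\big(\frac{\ell-n+2\nu}{k-n+2\nu}\big)^{\nu}$. These are precisely the two stated bounds, in fact with $C=1$.

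None of these steps is deep; the only thing that genuinely needs care is the uniformity of the constants. For the complementary series that is the entire reason for pulling out the $\frac{1-\nu}{1+\nu}$ factor before estimating, and one must check that the residual logarithm carries an $O(1)$ independent of $\nu$ (including $\nu$ near $\pm1$) and of $k$ — which is what the $\abs{\nu}^3$-weighted convergent tail above provides. For the discrete series the delicate point is to reproduce the exact endpoints $k-n+2$ and $k-n+2\nu$, and this is what forces the treatment by exact cancellation of rising factorials rather than by any asymptotic estimate carrying a $\nu$-dependent error; once the closed form is in hand the monotonicity bound is immediate. So the bookkeeping of these constants is essentially the only obstacle.
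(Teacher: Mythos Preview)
Your argument is correct in all three cases. Note, however, that the paper does not supply its own proof of this lemma: it is quoted verbatim as Lemma~2.1 of Flaminio--Forni \cite{Forni} and used as a black box, so there is no proof here to compare yours against. What you have written is an independent, self-contained verification of the cited result.

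A couple of minor remarks. For the principal series you can also read $\abs{\Pi_{\nu,k}}=1$ directly from the first line of the norm recursion \eqref{for:18}, which already says $\norm{u_k}^2=\norm{u_{k-2}}^2$; your modulus-one argument on each factor $\frac{m-\nu}{m+\nu}$ is equivalent but slightly more roundabout. For the discrete series your telescoping identity
\[
\frac{\Pi_{\nu,k}}{\Pi_{\nu,\ell}}=\prod_{j=0}^{\nu-1}\frac{(\ell-n+2)+2j}{(k-n+2)+2j}
\]
holds for \emph{all} $k\geq\ell\geq n$ in $I_\nu$, including the regime $k-\ell\leq 2\nu$ where the original product has fewer than $\nu$ factors (the same cancellation works in reverse), so your monotonicity bound with $C=1$ is valid without any case split. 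For the complementary series your uniformity check is exactly the point: the tail $\sum_{m\geq 3}\frac{2\abs{\nu}^3}{m^3-\nu^2 m}$ is dominated by $2\sum_{m\geq 3}\frac{1}{m^3-m}$ for all $\abs{\nu}<1$, and the $O(1)$ in $\sum_{m\text{ odd},\,3\leq m\leq k-1}\frac{1}{m}=\tfrac12\log k+O(1)$ is absolute, so the constant $C$ is indeed independent of both $\nu$ and $k$.
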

By the above lemma, $\norm{u_k}_s^2\approx (1+\abs{k})^{2s-\text{Re}(\nu)}$. So it follows that,
\begin{align}\label{for:16}
 \norm{f}_s\approx \big( \sum_k(1+\abs{k})^{2s-\text{Re}(\nu)}\abs{f_k}^2\big)^{\frac{1}{2}}.
\end{align}

\section{The basic solutions of the twisted equation }
In this section we study the twisted cohomological equation
\begin{align}\label{for:105}
 (X+m)f=g
\end{align}
$m\in \RR$ of the classical geodesic flow defined by the $\mathfrak{sl}(2,\RR)$-matrix $X=\begin{pmatrix}
  1 & 0 \\
  0 & -1
\end{pmatrix}$ in each $\mathcal{H}_\nu$. The action
of $X$ on the basis element $\{u_k\}$ is given by:
\begin{lemma}\label{le:7} (Lemma 3.4 of \cite{Forni}) We have
 \begin{align*}
  (X+m)u_k=\text{\small$\frac{k+1+\nu}{2}$}u_{k+2}+mu_k-\text{\small$\frac{k-1-\nu}{2}$}u_{k-2},\qquad \forall\,k\in I_\nu.
 \end{align*}
For $n\in\NN$ $\nu=n-1$ and $k=n$, the above equation  must be read as $(X+m)u_n=mu_n+nu_{n+2}$.
\end{lemma}
Let $f=\sum_k f_ku_k$ and $g=\sum_kg_ku_k$ be the Fourier expansions of the distributions $f$, $g$ with respect to the adapted basis of $\mathcal{H}_\nu$. So
the twisted equation \eqref{for:105} becomes
\begin{align}\label{for:2}
  g_k=-\text{\small$\frac{k+1-\nu}{2}$} f_{k+2}+mf_k+\text{\small$\frac{k-1+\nu}{2}$}f_{k-2}
\end{align}
for all $k\in I_\nu$; for $\nu=n-1$ (discrete series) and $k=n$ equation \eqref{for:2} should be read as $g_n=mf_n-f_{n+2}$.

\begin{definition}\label{de:1}
We say that a vector $f=\sum_kf_ku_k$ in $\mathcal{H}_\nu$ is $\Theta$-finite if where exists $\ell\in\NN$ such that $f_k=0$ if $\abs{k}\geq\ell$. It is clear that if $f$ is
$\Theta$-finite then $f\in \mathcal{H}^\infty_\nu$.
\end{definition}
From \eqref{for:2} we have a simple \textbf{observation}: if $(X+m)f=g$, where $g=\sum_{k\in \mathcal{S}_\nu}g_ku_k$ and $f$ is $\Theta$-finite, then $f=g=0$.
\subsection{Uniqueness of the solution}
For the twisted equation \eqref{for:105}, if $m=0$,  in each non-trivial irreducible component of $SL(2,\RR)$, the uniqueness of the solution is guaranteed by the ergodicity of the geodesic flow \cite{howe-moore}. For the case of $m\neq0$, the next results show that it it unique in any representations.
\begin{lemma}\label{le:4}
Suppose $(\pi,\mathcal{H})$ is a unitary representation of $SL(2,\RR)$. Then for any $g\in \mathcal{H}$ and any
$m\in\RR\backslash 0$, the twisted equation
\begin{align*}
 (X+m)f=g
\end{align*}
has a unique solution $f\in \mathcal{H}$ with
\begin{align*}
  \norm{f}\leq \abs{m}^{-1}\norm{g}.
\end{align*}

\end{lemma}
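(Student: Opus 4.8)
The plan is to exploit the fact that for $m \neq 0$ the operator $X+m$ is, morally, invertible because $X$ is skew-adjoint and so has purely imaginary spectrum, while $m$ shifts this off the origin. Concretely, since $\pi$ is a unitary representation, the element $X \in \mathfrak{sl}(2,\RR)$ acts as a skew-symmetric (essentially skew-adjoint) operator on $\mathcal{H}$: on the Gårding domain one has $\langle Xv, v\rangle + \langle v, Xv\rangle = 0$, hence $\langle Xv, v\rangle \in i\RR$. First I would make this rigorous by noting that $X$ generates the one-parameter subgroup of diagonal matrices $\{\operatorname{diag}(e^t, e^{-t})\}$, so $\pi(\exp(tX))$ is a unitary one-parameter group and $X$ is (up to the factor $i$) its self-adjoint generator by Stone's theorem; call it $X = iA$ with $A$ self-adjoint. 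Then $X + m = i(A - im)$, and since $A$ is self-adjoint its spectrum lies in $\RR$, so $A - im$ is invertible with $\norm{(A-im)^{-1}} \leq |m|^{-1}$ by the spectral theorem (the resolvent bound $\operatorname{dist}(im, \sigma(A))^{-1} \geq |m|^{-1}$). Therefore $X+m$ is a bounded-below, invertible operator on $\mathcal{H}$ with $\norm{(X+m)^{-1}} \leq |m|^{-1}$, which gives both existence of a unique $f = (X+m)^{-1}g$ and the estimate $\norm{f} \leq |m|^{-1}\norm{g}$.

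An alternative, more self-contained route avoiding Stone's theorem: establish the a priori inequality $\norm{(X+m)f} \geq |m|\,\norm{f}$ directly for $f$ in the Gårding space (or for $\Theta$-finite $f$, using Definition \ref{de:1} and density), by expanding
\begin{align*}
\norm{(X+m)f}^2 = \norm{Xf}^2 + m^2\norm{f}^2 + 2m\,\mathrm{Re}\langle Xf, f\rangle = \norm{Xf}^2 + m^2\norm{f}^2 \geq m^2\norm{f}^2,
\end{align*}
where the cross term vanishes because $\mathrm{Re}\langle Xf, f\rangle = \tfrac12(\langle Xf,f\rangle + \langle f, Xf\rangle) = 0$ by skew-symmetry of $X$. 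This proves uniqueness immediately and shows $X+m$ has closed range. To get surjectivity onto all of $\mathcal{H}$, I would show the range is also dense: if $h \perp \mathrm{Range}(X+m)$ then $(X+m)^*h = (-X+m)h = 0$ in the distributional sense, but the same inequality applied to $-X+m$ (i.e. replacing $X$ by $-X$, which is also skew-symmetric) forces $h = 0$. Combining closed range with dense range yields $\mathrm{Range}(X+m) = \mathcal{H}$, hence the solution exists for every $g \in \mathcal{H}$, and the norm bound is exactly the a priori inequality rearranged.

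The main technical point to handle carefully is the domain issue: $X$ is unbounded, so all the manipulations $\langle Xf, f\rangle + \langle f, Xf\rangle = 0$ must be justified on a suitable dense core (the Gårding space $\mathcal{H}^\infty$, or equivalently the span of $\Theta$-finite vectors across the direct integral decomposition \eqref{for:1}), and then the inequality extended to the full domain of the closure of $X$ by a limiting argument; the solution $f = (X+m)^{-1}g$ lies a priori only in $\mathcal{H}$, which is all that is claimed. I expect this bookkeeping — confirming that $X+m$ with its natural domain is closed, that the adjoint is $-X+m$, and that the integration-by-parts identity holds without boundary terms — to be the only real obstacle; once the skew-symmetry of $X$ is in hand the estimate $\norm{f} \leq |m|^{-1}\norm{g}$ is immediate and representation-independent, which is precisely why the statement holds in \emph{any} unitary representation, with or without spectral gap.
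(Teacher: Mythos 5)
Your first argument is essentially the paper's own proof: the paper decomposes $\pi|_{\{\exp(tX)\}}$ as a direct integral of characters $\chi\in\widehat{\RR}$ with $\chi'(0)\in i\RR$ and inverts $m+\chi'(0)$ pointwise using $|m+\chi'(0)|\geq|m|$, which is exactly the Stone's-theorem/spectral resolvent bound $\norm{(X+m)^{-1}}\leq|m|^{-1}$ you give. Your alternative a priori estimate with the closed-range/dense-range argument is a correct variant resting on the same essential skew-adjointness of $X$, so no further comment is needed.
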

\begin{proof}
In $SL(2,\RR)$ let $X_t$ denote the  classical geodesic flow $\Big\{\begin{pmatrix}
  e^t & 0 \\
  0 & e^{-t}
\end{pmatrix}\Big\}_{t\in\RR}$. Since $X_t$ is isomorphic to $\RR$ we have a direct integral decomposition
\begin{align*}
    \pi\mid_{X_t}=\int_{\widehat{\RR}}\chi(t)du(\chi)
\end{align*}
where $u$ is a regular Borel measure and
\begin{align*}
    v=\int_{\widehat{\RR}}v_{\chi}du(\chi),\qquad \forall\, v\in \mathcal{H}.
\end{align*}
Set
\begin{align*}
  f_{\chi}=(m+\chi'(0))^{-1}g_{\chi},\qquad \chi\in \widehat{\RR}.
\end{align*}
We see that $f=\int_{\widehat{\RR}}(m+\chi'(0))^{-1}g_{\chi}du(\chi)$ is a formal solution of the equation $(X+m)f=g$.

Next, we will show that $f\in \mathcal{H}$.  Since $\chi'(0)\in i\RR$ and $|m+\chi'(0)|\geq \abs{m}$ for any $\chi\in \widehat{\RR}$, we have
\begin{align*}
 \norm{f}^2=\int_{\widehat{\RR}}\abs{m+\chi'(0)}^{-2}\norm{g_{\chi}}^2du(\chi)\leq \abs{m}^{-2}\int_{\widehat{\RR}}\norm{g_{\chi}}^2du(\chi)=\abs{m}^{-2}\norm{g}^2.
\end{align*}
This shows that $f\in \mathcal{H}$.

On the other hand, if $(X+m)f=0$ with $f\in \mathcal{H}$, then we have
\begin{align*}
  (m+\chi'(0))f_{\chi}=0
\end{align*}
for almost every $\chi\in \widehat{\RR}$ with respect to $u$. This implies that $f_{\chi}=0$ for almost every $\chi\in \widehat{\RR}$. Then we have $f=0$. Hence we showed the uniqueness of the solution of the twisted equation. This completes the proof.

\end{proof}

\subsection{Explicit construction of basic solutions}\label{sec:2} For any $m\in \RR$ and $n\in I_\nu\backslash \mathcal{S}_\nu$, we want to find the vector
\begin{align}\label{for:8}
 &\mathfrak{U}_n=\left\{\begin{aligned}&u_n+d_0u_0+d_2u_2; &\quad &\text{ if }u_n\in \mathcal{H}^+_\nu,\,\nu\in i\RR\cup(-1,1)\\
&u_n+d_{-1}u_{-1}+d_1u_1,&\quad& \text{ if }u_n\in \mathcal{H}^-_\nu,\,\nu\in i\RR\cup(-1,1)\\
&u_n+d_{\nu+1}u_{\nu+1},&\quad& \text{ if }u_n\in \mathcal{H}^+_\nu,\,\nu\in \ZZ^+\cup 0,
\end{aligned}
 \right.
\end{align}
such that there is a $\Theta$-finite $f_{\{n\}}$ such that
\begin{align*}
 (X+m)f_{\{n\}}=\mathfrak{U}_n.
\end{align*}
We write
\begin{align}\label{for:44}
 &f_{\{n\}}=\left\{\begin{aligned}&\sum_{\substack{2\leq 2k\leq -n\\ n+2k\in I_\nu}}b_{n,n+2k}u_{n+2k}; &\quad &\text{if }n<0\\
&\sum_{\substack{2\leq 2k<n\\ n-2k\in I_\nu}}b_{n,n-2k}u_{n-2k},&\quad& \text{if }n> 0.
\end{aligned}
 \right.
\end{align}
By \eqref{for:2}, if $n\geq 1$ it is easy to see that
\begin{align}\label{for:12}
 b_{n,n-2}=\text{\small $\frac{2}{n-1+\nu}$},\quad b_{n,n-4}=-\text{\small $\frac{4m}{(n-1+\nu)(n-3+\nu)}$};
\end{align}
and for any $k\geq 1$ with $n-2k\geq 0$, we can obtain the sequence $(b_{n,n-2k})$ using a recursive rule  along with the two initial elements $b_{n,n-2}$ and $b_{n,n-4}:$
\begin{align*}
 b_{n,n-2k}=-\text{\small $\frac{2m}{n-2k+1+\nu}$}b_{n,n-2k+2}+\text{\small $\frac{n-2k+3-\nu}{n-2k+1+\nu}$}b_{n,n-2k+4}.
\end{align*}
If $n\leq -1$ we get that
\begin{align}\label{for:19}
  b_{n,n+2k}=b_{-n,-n-2k},\qquad\text{if }n<0,\,k\geq1, \text{ and }n+2k\leq 0.
\end{align}
Set
\begin{align}\label{for:43}
\mathfrak{U}_n= (X+m)f_{\{n\}}.
\end{align}
It is clear that $\mathfrak{U}_n$ has the same form as in \eqref{for:8}.

These $f_{\{n\}}$, $n\in\ZZ$ are called \emph{basic solutions} and will be used to construct explicit solution of the twisted equation \eqref{for:105} in next Section \ref{sec:5}.
We now make a slight digression to obtain upperbounds of
\begin{align*}
 \abs{b_{n,n-2k}}\text{\small$\frac{\norm{u_{n-2k}}}{\norm{u_n}}$},
\end{align*}
which will be used to estimate the Sobolev orders of the basic solutions, see Proposition \ref{le:11}.
\begin{proposition}\label{po:1}
For $\nu\in i\RR\cup\ZZ^+\cup(-u_0,u_0)$ and $n-2k\in I_\nu$ we have
\begin{align*}
 \abs{b_{n,n-2k}}\text{\small$\frac{\norm{u_{n-2k}}}{\norm{u_n}}$}\leq C_m\abs{n}^{\frac{\abs{m}+2}{2}}.
\end{align*}
\end{proposition}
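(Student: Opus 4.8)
The plan is to track the ratio $r_{n,n-2k} := |b_{n,n-2k}|\,\norm{u_{n-2k}}/\norm{u_n}$ directly through the recursion that defines the $b_{n,n-2k}$, rather than solving the recursion in closed form. First I would reduce to the case $n>0$, since by \eqref{for:19} the coefficients for $n<0$ are literally equal to those for $|n|$, and the norm ratios $\norm{u_{n-2k}}/\norm{u_n}$ are governed by $|\Pi_{\nu,\cdot}|$ which depends only on the absolute value of the index; so the bound for negative $n$ follows from the bound for positive $n$. With $n>0$ fixed, write $j = n-2k$ running downward through the arithmetic progression from $n-2$ to the smallest admissible value (either $0$, $1$, $\nu+1$, or $2$ depending on the series and parity), and introduce the normalized quantities $\beta_j := b_{n,j}\,\norm{u_j}/\norm{u_n}$. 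Multiplying the recursion
\begin{align*}
 b_{n,j}=-\text{\small$\frac{2m}{j+1+\nu}$}b_{n,j+2}+\text{\small$\frac{j+3-\nu}{j+1+\nu}$}b_{n,j+4}
\end{align*}
through by $\norm{u_j}/\norm{u_n}$ and using \eqref{for:18} to express $\norm{u_j}/\norm{u_{j+2}}$ and $\norm{u_j}/\norm{u_{j+4}}$, one gets a recursion for $\beta_j$ with coefficients that are $O(1/j)$ times bounded factors (for $\nu\in i\RR$ the norm ratios are exactly $1$; for $\nu$ real one checks that $\frac{|j+1-\nu|}{|j+1+\nu|}$ and similar ratios are bounded above and below, using the spectral gap $|\nu|<\mathfrak{u}_0<1$ in the complementary case and Lemma~\ref{le:10} in the discrete case).

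The heart of the argument is then an inductive estimate. From \eqref{for:12}, the two seed values satisfy $|\beta_{n-2}| = \frac{2}{|n-1+\nu|}\cdot\frac{\norm{u_{n-2}}}{\norm{u_n}} \leq C\,n^{-1}\cdot C = O(1)$ and $|\beta_{n-4}| = O(n^{-2})\cdot O(1)$, so both are bounded independently of $n$ (in fact small). I would prove by downward induction on $j$ that $|\beta_j| \leq C_m\, n^{(\abs{m}+2)/2}$, or more precisely establish a sharper two-term bound of the form $|\beta_j| + |\beta_{j+2}| \leq A_m (n/\max(j,1))^{(|m|-\text{something})/2}$ or simply $|\beta_j| \leq C_m (n/\max(j,1))^{\abs{m}/2 + 1}$ that is preserved by the recursion; the point is that each step downward from $j+2$ to $j$ multiplies the worst term by at most $1 + \frac{C|m|}{j}$ from the first term of the recursion (and the second term, being $O(1)$ times $\beta_{j+4}$, only helps if one carries a paired bound). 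Iterating the factor $\prod_{j} \bigl(1 + \tfrac{C|m|}{j}\bigr)$ from $j \approx n$ down to $j \approx 1$ produces exactly a polynomial growth of order $n^{C|m|}$, and one must be careful with the constant $C$ to land on the exponent $\frac{|m|+2}{2}$ claimed. This bookkeeping of the exponent — making sure the product telescopes to $n^{(|m|+2)/2}$ rather than some larger power, which forces me to use the precise coefficient $\frac{2m}{j+1+\nu}$ and the precise norm ratios rather than crude bounds — is the step I expect to be the main obstacle.

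Concretely, I would set up the induction so that the combination $M_j := \max\{|\beta_j|, |\beta_{j+2}|\}/w(j)$ is nonincreasing in the downward direction for a suitable weight $w(j)$ that grows like $(n/j)^{(|m|+2)/2}$ as $j$ decreases; choosing $w$ correctly converts the multiplicative recursion into a monotone one. The recursion gives $|\beta_j| \leq \frac{2|m|}{|j+1+\nu|}\cdot\rho_1(j)\,|\beta_{j+2}| + \rho_2(j)\,|\beta_{j+4}|$ where $\rho_1(j) = \norm{u_j}/\norm{u_{j+2}}$ and $\rho_2(j) = \frac{|j+3-\nu|}{|j+1+\nu|}\norm{u_j}/\norm{u_{j+4}}$; in each series one checks $\rho_1(j), \rho_2(j) \le 1 + O(1/j)$ (for $\nu \in i\RR$ they equal $1$ exactly; for real $\nu$ one uses $\norm{u_j}^2/\norm{u_{j+2}}^2 = \frac{|j+1-\nu|}{|j+1+\nu|} \le 1$ when $\nu>0$, with the reverse and a $1+O(1/j)$ correction when $\nu<0$, and uniformity in $\nu$ coming from $|\nu|<\mathfrak{u}_0$ or the discrete-series estimate in Lemma~\ref{le:10}). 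Plugging these into the definition of $M_j$ and using $\sum_{j} 1/j = O(\log n)$ inside an exponential $\exp(C\sum 1/j)$, one recovers $M_j = O(1)$ relative to the weight, hence $|\beta_j| \le C_m\,w(j) \le C_m\, n^{(|m|+2)/2}$ for all admissible $j$, which is exactly the assertion $|b_{n,n-2k}|\,\norm{u_{n-2k}}/\norm{u_n} \le C_m |n|^{(|m|+2)/2}$. Finally I would note the boundary readings in Lemma~\ref{le:7} for the discrete series ($k = n$, $\nu = n-1$) cause no trouble since there the chain simply terminates one step earlier and the same bound applies a fortiori.
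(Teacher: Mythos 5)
Your overall route is the paper's: reduce to $n>0$ by \eqref{for:19}, run a downward induction on the recursion against a power-law majorant, convert products $\prod(1+\tfrac{\abs m}{\cdot})$ into powers by $\log(1+x)\le x$ and an integral estimate, and feed in the norm ratios from \eqref{for:18} and Lemma \ref{le:10} (the paper's majorant $c_j\approx\frac{2}{j}\,(n/j)^{\abs m/2}$ is exactly your weight $(n/j)^{(\abs m+2)/2}$ up to a factor $\frac{2}{n}$). But the step you yourself flag as the crux --- landing on the exponent $\frac{\abs m+2}{2}$ --- is not resolved by the mechanism you propose, and the accounting you give for it is wrong. If you normalize the pair by a single weight, $M_j=\max\{\abs{\beta_j},\abs{\beta_{j+2}}\}/w(j)$, then in the inductive step you can only use $\abs{\beta_{j+4}}\le M_{j+2}\,w(j+2)$, so monotonicity of $M_j$ requires $\frac{2\abs m}{j}\rho_1+\rho_2\le w(j)/w(j+2)\approx 1+\frac{2\alpha}{j}$; since $\rho_2\approx 1+\frac{2}{j}$ this forces $\alpha\ge\abs m+1$, not $\frac{\abs m+2}{2}$. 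With $\alpha=\frac{\abs m+2}{2}$ the per-step factor for $M_j$ is about $1+\frac{\abs m}{j}$, and your final appeal to ``$\exp(C\sum 1/j)=O(1)$'' is false: $\exp\big(C\sum_{j\le n}1/j\big)\asymp n^{C/2}$, so these unabsorbed $1+O(1/j)$ losses compound to an extra $n^{\abs m/2}$ and you only prove the bound with exponent $\abs m+1$. To get $\frac{\abs m+2}{2}$ you must keep $\beta_{j+2}$ and $\beta_{j+4}$ weighted separately and verify the exact supersolution inequality $\frac{2\abs m}{\abs{j+1+\nu}}c_{j+2}+\big|\frac{j+3-\nu}{j+1+\nu}\big|c_{j+4}\le c_j$ for $c_j\approx\frac{2}{j}(n/j)^{\abs m/2}$. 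That verification is not a soft $1+O(1/j)$ bookkeeping: it hinges on the second-order cancellation $(1+\frac{\abs m}{x})(1-\frac{\abs m}{x})=1-\frac{m^2}{x^2}$ played against the deficit $\approx\frac{2}{x}$ of $\big|\frac{x-\nu}{x+2+\nu}\big|$, and it only holds for $j\gtrsim m^2$ (this is precisely \eqref{for:34}, \eqref{for:10}, \eqref{for:5} in the paper, with thresholds like $\max\{6,m^2\}$); nothing in your sketch supplies this inequality, nor do you say how the finitely many ($m$-dependent) steps below the threshold are absorbed into $C_m$.

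There are also problems with your treatment of the norm ratios. For real $\nu>0$, \eqref{for:18} gives $\norm{u_j}^2/\norm{u_{j+2}}^2=\frac{j+1+\nu}{j+1-\nu}\ge 1$, the reverse of what you assert, and in the discrete series $\nu\in\ZZ^+$ is unbounded, so $\rho_1,\rho_2$ are \emph{not} $1+O(1/j)$ uniformly in $\nu$: near the bottom of the ladder ($j$ close to $\nu+1$) the combined coefficient is $\sqrt{\frac{(j+3)^2-\nu^2}{(j+1)^2-\nu^2}}=1+O\big(\frac{1}{j-\nu}\big)$, so the correct small parameter is $\frac{1}{j-\nu}$ rather than $\frac{1}{j}$ --- which is exactly why the paper's discrete-series majorant carries the prefactor $[(j+1)^2-\nu^2]^{-1/2}$ and factors $1+\frac{\abs m}{j-1-\nu}$ (Lemma \ref{le:9}, Corollary \ref{le:2}). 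These points are repairable along the paper's lines, but as written your claimed bounds on $\rho_1,\rho_2$, the uniformity of your constants in $\nu$, and above all the exponent bookkeeping do not hold, so the proposal has a genuine gap at the heart of the estimate.
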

From \eqref{for:19} it suffices to consider the case of  $n\geq0$ and $n-2k\geq0$. The remaining part of this section will be dedicated to the proof of this proposition.
\subsection{Upperbounds for $\nu\in i\RR$} We set
\begin{align}\label{for:47}
  c_{n-2k}&=\text{\small$\frac{2}{\abs{n-2k+1+\nu}}\Pi_{\ell=1}^{k}\big(1+\frac{\abs{m}}{\abs{n-2\ell+1+\nu}}\big)$}.
\end{align}
\begin{lemma}\label{le:5}
Suppose $\nu\in i\RR$ and $n-2k\geq \max\{6, m^2\}$, then
\begin{align*}
 \abs{b_{n,n-2k}}\leq c_{n-2k}.
\end{align*}

\end{lemma}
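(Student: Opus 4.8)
\textbf{Plan of proof for Lemma \ref{le:5}.}

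The plan is to prove the bound $\abs{b_{n,n-2k}}\leq c_{n-2k}$ by induction on $k$, exploiting the three–term recursion for $(b_{n,n-2j})$ together with the explicit initial data in \eqref{for:12}. First I would check the base cases $k=1$ and $k=2$ directly: from \eqref{for:12} we have $\abs{b_{n,n-2}}=\frac{2}{\abs{n-1+\nu}}$, which equals $c_{n-2}$ exactly since the empty product is $1$; and $\abs{b_{n,n-4}}=\frac{4\abs{m}}{\abs{n-1+\nu}\,\abs{n-3+\nu}}$, which is bounded by $c_{n-4}=\frac{2}{\abs{n-3+\nu}}\bigl(1+\frac{\abs{m}}{\abs{n-1+\nu}}\bigr)$ because $\frac{2\abs{m}}{\abs{n-1+\nu}}\leq 1+\frac{\abs{m}}{\abs{n-1+\nu}}$, i.e. $\frac{\abs{m}}{\abs{n-1+\nu}}\leq 1$, which holds once $n\geq m^2$ (using $\nu\in i\RR$, so $\abs{n-1+\nu}\geq n-1$, and $n-1\geq\abs{m}$ in the stated range). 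This last inequality is exactly why the hypothesis $n-2k\geq\max\{6,m^2\}$ is imposed.

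For the inductive step, suppose the bound holds for the two predecessors $b_{n,n-2k+2}$ and $b_{n,n-2k+4}$, and apply the recursion
\begin{align*}
 b_{n,n-2k}=-\text{\small $\frac{2m}{n-2k+1+\nu}$}b_{n,n-2k+2}+\text{\small $\frac{n-2k+3-\nu}{n-2k+1+\nu}$}b_{n,n-2k+4}.
\end{align*}
Taking absolute values and using $\nu\in i\RR$ (so $\abs{n-2k+3-\nu}\leq\abs{n-2k+1+\nu}+2$, hence the coefficient $\bigl|\frac{n-2k+3-\nu}{n-2k+1+\nu}\bigr|$ is close to $1$; one should track the $+2$ carefully, which is where $n-2k\geq 6$ enters to keep denominators away from zero and the geometric-type comparison valid), the first term is bounded by $\frac{2\abs{m}}{\abs{n-2k+1+\nu}}c_{n-2k+2}$ and the second by roughly $c_{n-2k+4}$ up to the correction from the coefficient not being exactly $1$. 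The key algebraic fact to verify is the \emph{monotonicity/telescoping inequality}
\begin{align*}
 \text{\small$\frac{2\abs{m}}{\abs{n-2k+1+\nu}}$}c_{n-2k+2}+\bigl(\text{correction}\bigr)c_{n-2k+4}\leq c_{n-2k},
\end{align*}
which, after substituting the definition \eqref{for:47} of the $c$'s and cancelling the common product $\Pi_{\ell=1}^{k-2}(1+\frac{\abs{m}}{\abs{n-2\ell+1+\nu}})$, reduces to a concrete inequality among the factors $\frac{1}{\abs{n-2k+1+\nu}}$, $\frac{1}{\abs{n-2k+3+\nu}}$, $\frac{1}{\abs{n-2k+5+\nu}}$ and $1+\frac{\abs{m}}{\abs{n-2k+3+\nu}}$, $1+\frac{\abs{m}}{\abs{n-2k+5+\nu}}$. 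Since along the range $n-2j$ decreases as $j$ increases, the denominators $\abs{n-2k+1+\nu}\leq\abs{n-2k+3+\nu}\leq\abs{n-2k+5+\nu}$, so each factor $1+\frac{\abs{m}}{\abs{n-2j+1+\nu}}$ in $c_{n-2k}$ dominates the corresponding factor appearing in $c_{n-2k+2}$ and $c_{n-2k+4}$; this gives the room needed to absorb the two summands.

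I expect the main obstacle to be the bookkeeping in this last inequality: the two terms in the recursion carry products of different lengths (namely $k-1$ versus $k-2$ factors), and one must show that the single extra factor $1+\frac{\abs{m}}{\abs{n-2k+1+\nu}}$ in $c_{n-2k}$ — together with the slack in comparing $\frac{n-2k+3-\nu}{n-2k+1+\nu}$ to $1$ — is enough to cover the sum of a term of "size $\frac{2\abs{m}}{\abs{n-2k+1+\nu}}$ times a shorter product" and a term of "size $\frac{n-2k+3-\nu}{n-2k+1+\nu}$ times an even shorter product." The hypotheses $n-2k\geq 6$ and $n-2k\geq m^2$ are precisely what make each individual comparison go through (the former controls the $O(1/\abs{n-2k})$ corrections from the coefficient $\frac{n-2k+3-\nu}{n-2k+1+\nu}$, the latter guarantees $\frac{\abs{m}}{\abs{n-2k+1+\nu}}\leq 1$ so that the "$1+\cdots$" factors behave like a convergent telescoping product rather than growing). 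Once the single-step inequality is established the induction closes immediately.
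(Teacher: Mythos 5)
Your overall architecture (induction on $k$, base cases from \eqref{for:12}, reduction to a single-step inequality among the $c$'s after cancelling the common product) is the same as the paper's, and the base cases are essentially fine (minor slip: for $k=1$ the product in \eqref{for:47} has one factor, not an empty product, so $\abs{b_{n,n-2}}$ is $\leq c_{n-2}$ rather than equal to it). The genuine gap is in your justification of the single-step inequality. First, the factors $1+\frac{\abs{m}}{\abs{n-2\ell+1+\nu}}$ appearing in $c_{n-2k}$, $c_{n-2k+2}$, $c_{n-2k+4}$ are literally the same factors (they depend on $\ell$ and $n$ only); there is no "each factor dominates its counterpart" effect, and the only extra room is the one (resp.\ two) additional factors $1+A$, $1+B$ with $A=\frac{\abs{m}}{\abs{n-2k+1+\nu}}$, $B=\frac{\abs{m}}{\abs{n-2k-1+\nu}}$ (in the paper's indexing). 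Second, the coefficient $\bigl|\frac{n-2k+3-\nu}{n-2k+1+\nu}\bigr|$ is actually greater than $1$, and treating it as "close to $1$ up to an $O(1/\abs{n-2k})$ correction" destroys exactly the term you need: after normalizing, the inequality to prove is
\begin{align*}
\Big|\text{\small$\frac{n-2r+1+\nu}{n-2r+3+\nu}$}\Big|+2A(1+A)\leq (1+A)(1+B),
\end{align*}
and the first-order contributions cancel ($2A$ against $A+B$ with $B\geq A$), so everything hinges on a second-order estimate: one must show the ratio is below $1$ by at least $A^2$, i.e.\ $\bigl|\frac{n-2r+1+\nu}{n-2r+3+\nu}\bigr|\leq 1-\frac{m^2}{\abs{n-2r+1+\nu}^2}$. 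If you only use "ratio $\leq 1$" (or $\leq 1+O(1/x)$) and rely on the extra factors, the available slack is $(B-A)(1+A)-A^2$, which is of size $\frac{\abs{m}(2-\abs{m})}{x^2}$ and is negative whenever $\abs{m}>2$; so the monotonicity/telescoping absorption you propose simply fails for large twists.

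Relatedly, you assign the hypotheses the wrong roles. The condition $n-2k\geq m^2$ is not there to make $\frac{\abs{m}}{\abs{n-2k+1+\nu}}\leq 1$; it is exactly what makes the second-order estimate work: with $x=n-2r+1$, $b=\abs{\nu}$, one writes $\frac{\sqrt{x^2+b^2}}{\sqrt{(x+2)^2+b^2}}=\sqrt{1-\frac{4x+4}{(x+2)^2+b^2}}\leq 1-\frac{2x+2}{(x+2)^2+b^2}\leq 1-\frac{m^2}{x^2+b^2}=(1-A)(1+A)$, where the last inequality needs $x\geq\max\{5,\,m^2-1\}$ (this is also where the "$6$" comes from, via $x^2-4x-4\geq 0$, not from controlling corrections to a coefficient "close to $1$"). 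With that estimate in hand, the step closes because $(1-A)(1+A)+2A(1+A)\leq(1+A)(1+B)$ reduces to $A\leq B$. To repair your argument you must insert this quantitative deficit of the coefficient ratio; without it the induction does not close.
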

\begin{proof}
We prove by induction. From \eqref{for:12} we see that that
\begin{align*}
 \abs{b_{n,n-2k}}\leq c_{n-2k},\qquad k=1,\,2.
\end{align*}
Suppose
\begin{align}\label{for:11}
 \abs{b_{n,n-2k}}\leq c_{n-2k}, \qquad 2\leq k\leq r,
\end{align}
where $n-2r\geq \max\{6, m^2\}$.

If $n-2(r+1)\geq \max\{6, m^2\}$ then
\begin{align*}
 \abs{b_{n,n-2r-2}}&\leq \big|\text{\small $\frac{n-2r+1-\nu}{n-2r-1+\nu}$}b_{n,n-2r+2}\big|+\big|\text{\small $\frac{2m}{n-2r-1+\nu}$}b_{n,n-2r}\big|\\
 &\overset{\text{(1)}}{\leq} \big|\text{\small $\frac{n-2r+1-\nu}{n-2r-1+\nu}$}\big|c_{n-2r+2}+\big|\text{\small $\frac{2m}{n-2r-1+\nu}$}\big| c_{n-2r} \\
 &= \Big|\text{\small $\frac{n-2r+1-\nu}{n-2r-1+\nu}$}\Big|\cdot\text{\small $\frac{2}{\abs{n-2r+3+\nu}}$}\Pi_{\ell=1}^{r-1}\big(1+\text{\small $\frac{\abs{m}}{\abs{n-2\ell+1+\nu}}$}\big)\\
 &+\Big|\text{\small $\frac{2m}{n-2r-1+\nu}$}\Big|\cdot\text{\small $\frac{2}{\abs{n-2r+1+\nu}}$}\Pi_{\ell=1}^{r}\big(1+\text{\small $\frac{\abs{m}}{\abs{n-2\ell+1+\nu}}$}\big)\\
&= \Big|\text{\small $\frac{2}{n-2r-1+\nu}$}\Big|\Pi_{\ell=1}^{r-1}\big(1+\text{\small $\frac{\abs{m}}{\abs{n-2\ell+1+\nu}}$}\big)\\
 &\cdot\big(\text{\small $\frac{\abs{n-2r+1-\nu}}{\abs{n-2r+3+\nu}}$}+\text{\small $\frac{2\abs{m}}{\abs{n-2r+1+\nu}}$}\big(1+\text{\small $\frac{\abs{m}}{\abs{n-2r+1+\nu}}$}\big)\big).
\end{align*}
In $(1)$ we use the assumption \eqref{for:11}.

Hence to show that $\abs{b_{n,n-2r-2}}\leq c_{n-2r-2}$, it suffices to prove that
\begin{align}\label{for:34}
 &\Big|\text{\small $\frac{n-2r+1-\nu}{n-2r+3+\nu}$}\Big|+\text{\small $\frac{2\abs{m}}{\abs{n-2r+1+\nu}}$}\big(1+\text{\small $\frac{\abs{m}}{\abs{n-2r+1+\nu}}$}\big)\notag\\
 &\leq \Pi_{\ell=r}^{r+1}\big(1+\text{\small $\frac{\abs{m}}{\abs{n-2\ell+1+\nu}}$}\big).
\end{align}
In fact, for any $x\geq \max\{5, m^2-1\}$ and $b\in\RR$ we have
\begin{align*}
 \text{\small $\frac{\sqrt{x^2+b^2}}{\sqrt{(x+2)^2+b^2}}$}&=\text{\small $\sqrt{1-\frac{4x+4}{(x+2)^2+b^2}}$}\overset{\text{(1)}}{\leq} 1-\text{\small $\frac{2x+2}{(x+2)^2+b^2}$}\\
 &\overset{\text{(2)}}{\leq} 1-\text{\small $\frac{m^2}{x^2+b^2}$}=\big(1-\text{$\small \frac{\abs{m}}{\sqrt{x^2+b^2}}$}\big)\big(1+\text{\small $\frac{\abs{m}}{\sqrt{x^2+b^2}}$}\big)\\
 &\leq \big(1+\text{\small $\frac{\abs{m}}{\sqrt{x^2+b^2}}$}\big)\big(1+\text{\small $\frac{\abs{m}}{\sqrt{(x-2)^2+b^2}}$}-\text{\small $\frac{2\abs{m}}{\sqrt{x^2+b^2}}$}\big).
\end{align*}
In $(1)$ we use the inequality $\sqrt{1-y}\leq 1-\frac{1}{2}y$ if $0\leq y\leq 1$; in $(2)$ we use the assumption that $x\geq \max\{5, m^2-1\}$.

Then \eqref{for:34} follow immediately from the above inequality by letting $b=\abs{\nu}$ and $x=n-2r+1$. Hence we finish the proof.
\end{proof}
\begin{corollary}\label{le:8}
Suppose $\nu\in i\RR$. If $n\geq0$ and $n-2k\geq0$ then
\begin{align*}
 \abs{b_{n,n-2k}}\text{\small$\frac{\norm{u_{n-2k}}}{\norm{u_n}}$}\leq C_m\abs{n}^{\frac{\abs{m}}{2}}.
\end{align*}
\end{corollary}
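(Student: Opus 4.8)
The plan is to reduce everything to Lemma \ref{le:5} together with a short backward recursion covering a bounded number of small indices. Since $\nu\in i\RR$, the recursion \eqref{for:18} gives $\norm{u_{n-2k}}=\norm{u_n}$, so the ratio $\norm{u_{n-2k}}/\norm{u_n}$ equals $1$ and it suffices to prove $\abs{b_{n,n-2k}}\le C_m\abs{n}^{\abs{m}/2}$ whenever $n\ge0$ and $n-2k\ge0$. When $n$ is bounded by a suitable constant depending on $m$ there are only finitely many admissible pairs $(n,k)$, and for each of them one can run the recursion from $b_{n,n-2}$, $b_{n,n-4}$ in \eqref{for:12} with coefficients that are bounded independently of $\nu$ (see the third paragraph), producing a bound depending only on $m$; so from now on I assume $n$ is large.

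For indices with $n-2k\ge\max\{6,m^2\}$ I would invoke Lemma \ref{le:5}, which gives $\abs{b_{n,n-2k}}\le c_{n-2k}$ with $c_{n-2k}$ as in \eqref{for:47}. Since $1\le n-2k+1\le n-2\ell+1$ for $1\le\ell\le k$ and $\nu\in i\RR$, one has $\abs{n-2\ell+1+\nu}\ge n-2\ell+1$, so the prefactor in \eqref{for:47} is $\le\tfrac27$ and
\begin{align*}
\prod_{\ell=1}^{k}\Big(1+\tfrac{\abs{m}}{\abs{n-2\ell+1+\nu}}\Big)\le\exp\Big(\abs{m}\sum_{\ell=1}^{k}\tfrac{1}{n-2\ell+1}\Big)\le\exp\Big(\tfrac{\abs{m}}{2}\ln n+C_m\Big)=C_m\,n^{\abs{m}/2},
\end{align*}
the middle step coming from the elementary comparison $\sum_{\ell=1}^{k}(n-2\ell+1)^{-1}\le\tfrac12\int_{n-2k-1}^{\,n-1}x^{-1}\,dx\le\tfrac12\ln n$. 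Hence $c_{n-2k}\le C_m n^{\abs{m}/2}$, which settles this range.

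It remains to handle the finitely many (i.e. $O_m(1)$) indices with $0\le n-2k<\max\{6,m^2\}$. For these I would run the recursion
\[
 b_{n,n-2k}=\tfrac{n-2k+3-\nu}{n-2k+1+\nu}\,b_{n,n-2k+4}-\tfrac{2m}{n-2k+1+\nu}\,b_{n,n-2k+2}
\]
downward, starting from the largest such index. Since $n-2k\ge0$ forces $\abs{n-2k+1+\nu}\ge n-2k+1\ge1$, one has $\abs{2m/(n-2k+1+\nu)}\le2\abs{m}$ and
\[
 \Big|\tfrac{n-2k+3-\nu}{n-2k+1+\nu}\Big|=\Big(1+\tfrac{4(n-2k+2)}{(n-2k+1)^2+\abs{\nu}^2}\Big)^{1/2}\le\Big(1+\tfrac4{n-2k+1}+\tfrac4{(n-2k+1)^2}\Big)^{1/2}\le3,
\]
uniformly in $n,k$ and $\nu$. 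Thus each step multiplies the current bound by at most $3+2\abs{m}$; after at most $\lceil\max\{6,m^2\}/2\rceil$ steps the two coefficients appearing on the right have indices $\ge\max\{6,m^2\}$ and are controlled by the previous paragraph, so the final bound is $(3+2\abs{m})^{\lceil\max\{6,m^2\}/2\rceil}\cdot C_m n^{\abs{m}/2}=C_m n^{\abs{m}/2}$. Combining the two ranges gives the claim.

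The substance of the estimate already lives in Lemma \ref{le:5}, so there is no single deep step; the point most likely to cause friction is controlling the recursion coefficients for the small indices \emph{uniformly in $\abs{\nu}$}, which is not obvious at a glance since $\abs{\nu}$ is unbounded across the principal series — the resolution being that $\abs{n-2k+1+\nu}\ge n-2k+1\ge1$ and that the quotient above is at most $3$. The only other point requiring attention is the logarithmic bookkeeping $\sum_{\ell\le k}(n-2\ell+1)^{-1}\le\tfrac12\ln n+O_m(1)$ that converts the product $c_{n-2k}$ into the power $n^{\abs{m}/2}$.
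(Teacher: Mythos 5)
Your proof is correct and follows essentially the same route as the paper: for $\nu\in i\RR$ the norm ratio $\norm{u_{n-2k}}/\norm{u_n}$ is $1$, and the estimate of $c_{n-2k}$ via $\log(1+x)\le x$ plus an integral comparison, combined with Lemma \ref{le:5}, yields the power $C_m n^{\abs{m}/2}$. The only difference is that you explicitly handle the finitely many indices with $0\le n-2k<\max\{6,m^2\}$ (and the case of bounded $n$) by a bounded number of recursion steps whose coefficients are bounded uniformly in $\abs{\nu}$ because $\abs{n-2k+1+\nu}\ge n-2k+1$; the paper's proof passes over this range silently even though Lemma \ref{le:5} only covers $n-2k\ge\max\{6,m^2\}$, so your version is in fact slightly more complete.
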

\begin{proof}
We note that if $n\geq0$ and $n-2k\geq2$ then
\begin{align*}
 &\log \Big(\Pi_{\ell=1}^{k}\big(1+\text{\small $\frac{\abs{m}}{\abs{n-2\ell+1+\nu}}$}\big)\Big)=\sum_{\ell=1}^{k}\log\big(1+\text{\small $\frac{\abs{m}}{\abs{n-2\ell+1+\nu}}$}\big)\\
 &\overset{\text{(1)}}{\leq} \int_{1}^{k+1}\text{\small $\frac{\abs{m}}{n-2\ell+1}$}d\ell=\frac{\abs{m}}{2}\log\big(\text{\small $\frac{n-1}{n-2k-1}$}\big).
\end{align*}
In $(1)$ we use the inequality $\log(1+x)\leq x$, for any $x\geq0$ and integral inequalities.

The above inequality shows that if $n-2k\geq 2$ then
\begin{align*}
\abs{c_{n-2k}}\leq \text{\small $\frac{2}{n-2k+1}$}(\text{\small $\frac{n-1}{n-2k-1}$})^{\frac{\abs{m}}{2}}.
\end{align*}
By Lemma \ref{le:10} we have $\norm{u_{n-2k}}=\norm{u_n}$. Then the above estimates and Lemma \ref{le:5} imply the conclusion.
\end{proof}
\subsection{Upperbounds for $\nu\in (-1,1)\backslash 0$} Let
\begin{align*}
c_{n-2k}=\text{\small $\frac{4}{n-2k+1+\nu}$}\Pi_{\ell=1}^{k}(1+\text{\small $\frac{\abs{m}+2-\nu}{n-2\ell-1+\nu}$}).
\end{align*}
\begin{lemma}\label{le:6}
Suppose $\nu\in (-1,1)\backslash 0$ and $n-2k-\nu>\max\{7,\,2\abs{m}(\abs{m}+2)\}$. Then $\abs{b_{n,n-2k}}\leq c_{n-2k}$.
\end{lemma}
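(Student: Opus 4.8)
The plan is to mirror the inductive argument of Lemma \ref{le:5}, now tracking the extra contribution from the factor $\abs{\Pi_{\nu,k}}$ in the norms. Recall from \eqref{for:12} that $b_{n,n-2} = \tfrac{2}{n-1+\nu}$ and $b_{n,n-4} = -\tfrac{4m}{(n-1+\nu)(n-3+\nu)}$, so for the base cases $k=1,2$ one checks directly that $\abs{b_{n,n-2k}} \leq c_{n-2k}$ under the hypothesis $n-2k-\nu > \max\{7, 2\abs{m}(\abs{m}+2)\}$; here the shift by $\nu$ and the slightly enlarged constant $\abs{m}+2-\nu$ inside the product are chosen precisely to absorb the denominator $n-2\ell-1+\nu$ (coming from the recursion coefficient $\tfrac{n-2k+3-\nu}{n-2k+1+\nu}$, which for $\nu > 0$ exceeds $1$, unlike the purely imaginary case). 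Assume inductively $\abs{b_{n,n-2\ell}} \leq c_{n-2\ell}$ for $2 \leq \ell \leq r$ with $n-2r-\nu$ above the threshold.

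Then I would apply the recursion
\[
 b_{n,n-2r-2} = -\tfrac{2m}{n-2r-1+\nu}\,b_{n,n-2r} + \tfrac{n-2r+3-\nu}{n-2r-1+\nu}\,b_{n,n-2r+2},
\]
bound each term using the inductive hypothesis, factor out $\tfrac{4}{n-2r-1+\nu}\,\Pi_{\ell=1}^{r-1}\bigl(1+\tfrac{\abs{m}+2-\nu}{n-2\ell-1+\nu}\bigr)$, and reduce to the scalar inequality
\[
 \tfrac{n-2r+3-\nu}{n-2r+1+\nu} + \tfrac{2\abs{m}}{n-2r-1+\nu}\bigl(1+\tfrac{\abs{m}+2-\nu}{n-2r-1+\nu}\bigr) \leq \Pi_{\ell=r}^{r+1}\bigl(1+\tfrac{\abs{m}+2-\nu}{n-2\ell-1+\nu}\bigr).
\]
This is the analogue of \eqref{for:34}. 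The key elementary estimate replacing the $\sqrt{1-y} \leq 1-\tfrac12 y$ step is: for $x \geq \max\{6, 2\abs{m}(\abs{m}+2)\}$ and $0 \leq \beta \leq 1$,
\[
 \tfrac{x+2-\beta}{x-\beta} = 1 + \tfrac{2}{x-\beta} \leq 1 + \tfrac{\abs{m}+2-\beta}{x-2-\beta}\Bigl(1 - \tfrac{\abs{m}+2-\beta}{x-\beta}\Bigr)^{-1}\!\!,
\]
or more simply one shows $\tfrac{x+2-\beta}{x-\beta} + \tfrac{2\abs{m}(x-\beta + \abs{m}+2-\beta)}{(x-2-\beta)^2} \leq \bigl(1+\tfrac{\abs{m}+2-\beta}{x-2-\beta}\bigr)\bigl(1+\tfrac{\abs{m}+2-\beta}{x-4-\beta}\bigr)$ by expanding both sides and checking that the lower-order terms are dominated once $x$ exceeds the stated threshold. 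Setting $x = n-2r+1$, $\beta = \nu$ gives the required inequality, completing the induction.

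I expect the main obstacle to be bookkeeping rather than conceptual: one must choose the constants in the definition of $c_{n-2k}$ (the $4$ versus $2$, the shift $\abs{m}+2-\nu$, the threshold $2\abs{m}(\abs{m}+2)$) so that a single scalar inequality closes the induction for \emph{all} admissible $\nu \in (-1,1)\setminus 0$ uniformly, including the sign change in $\nu$ which makes the "off-diagonal" recursion coefficient larger than $1$. The subsequent conversion of this bound into Proposition \ref{po:1} — i.e. multiplying by $\tfrac{\norm{u_{n-2k}}}{\norm{u_n}} = \bigl(\tfrac{\Pi_{\nu,\abs{n-2k}}}{\Pi_{\nu,\abs{n}}}\bigr)^{1/2} \asymp (n/(n-2k))^{-\nu/2}$ via Lemma \ref{le:10}, then estimating $\log \Pi_{\ell=1}^k(1+\tfrac{\abs{m}+2-\nu}{n-2\ell-1+\nu})$ by an integral comparison as in Corollary \ref{le:8} — will then yield the clean exponent $\abs{n}^{(\abs{m}+2)/2}$, since the extra $\nu$-dependence of both the product and the norm ratio is bounded and contributes only to the constant $C_m$.
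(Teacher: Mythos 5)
Your strategy is structurally the same as the paper's: induct on $k$, insert the recursion for $b_{n,n-2k}$, factor out $\tfrac{4}{n-2k+1+\nu}\Pi_{\ell=1}^{k-2}\bigl(1+\tfrac{\abs{m}+2-\nu}{n-2\ell-1+\nu}\bigr)$, and reduce the inductive step to a single scalar inequality, which in the paper is \eqref{for:10}. The gap is in the verification of that inequality, which is the entire content of the lemma. First, your reduction is mis-indexed and does not follow from your own factoring: the surviving untwisted term is $\tfrac{n-2k+3-\nu}{n-2k+5+\nu}$, a ratio \emph{less} than $1$ whose deficit $\tfrac{2\nu+2}{n-2k+5+\nu}$ degenerates as $\nu\to-1$, and the twisted term is $\tfrac{2\abs{m}}{n-2k+3+\nu}\bigl(1+\tfrac{\abs{m}+2-\nu}{n-2k+1+\nu}\bigr)$; your version replaces the former by a ratio exceeding $1$ and shifts the denominators of the latter, so the target you set up is not the inequality the induction needs. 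Second, the elementary estimates you offer in place of a verification are false on the stated parameter range. Your first display fails whenever $\abs{m}<\beta$: its left side is $1+\tfrac{2}{x-\beta}$ while its right side is $1+\tfrac{\abs{m}+2-\beta}{x}+O(x^{-2})$ with $\abs{m}+2-\beta<2$, so it is violated for all large $x$ (e.g.\ $\abs{m}=\tfrac{1}{10}$, $\beta=\tfrac{9}{10}$). Your ``more simply'' display also fails near the threshold: for $\abs{m}=10$, $\beta=0.99$, $x=250\geq 2\abs{m}(\abs{m}+2)$ the left side is about $1.0933$ and the right side about $1.0915$, because its first-order surplus is only $\tfrac{2-2\beta}{x}$ while the mis-indexed terms inflate the left side by amounts of order $\tfrac{1}{x}$ independent of $2-2\beta$. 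Moreover your hypothesis $0\leq\beta\leq1$ excludes negative $\nu$ altogether, which is exactly the regime that forces the enlarged increment $\abs{m}+2-\nu$.

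What is needed, and what the paper actually supplies at this point, is a proof of \eqref{for:10} uniform in $\nu\in(-1,1)\setminus 0$ and in $m$: it writes $\tfrac{n-2k+3-\nu}{n-2k+5+\nu}=1-\tfrac{2\nu+2}{n-2k+5+\nu}$, splits into the cases $2-\abs{m}\geq\nu$ and $\nu>2-\abs{m}$, applies $(1+a)(1+b)\geq 1+a+b$ -- valid only when $ab>0$, which is what dictates the case split -- and in the second case must absorb an extra error term $\tfrac{6(n-2k-\nu+3)}{(n-2k-1+\nu)(n-2k+5+\nu)}$ using precisely the hypotheses $n-2k-\nu>7$ (so that $n-2k+5+\nu\leq 2(n-2k-1+\nu)$) and $n-2k-\nu>2\abs{m}(\abs{m}+2)$ (so that $n-2k-\nu>(\abs{m}+2-\nu)(\abs{m}-2+\nu)$). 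So ``expand both sides and check that the lower-order terms are dominated'' is not deferrable bookkeeping: with the inequalities as you wrote them it is false, and closing the induction requires the sign analysis and threshold use just described. The subsequent passage to Proposition \ref{po:1} via Lemma \ref{le:10} that you sketch at the end is fine and matches Corollary \ref{le:1}, but the core step of Lemma \ref{le:6} is missing.
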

\begin{proof}
We prove by induction. From \eqref{for:12} it is easy to check that
\begin{align*}
 \abs{b_{n,n-2k}}\leq c_{n-2k},\qquad k=1,\,2.
\end{align*}
if $n-2k-\nu>\max\{7,\,2\abs{m}(\abs{m}+2)\}$.

Suppose
\begin{align}\label{for:13}
 \abs{b_{n,n-2r}}\leq c_{n-2r}, \qquad 2\leq r\leq k-1,
\end{align}
where $n-2(k-1)>\max\{7,\,2\abs{m}(\abs{m}+2)\}$.

If $n-2k>\max\{7,\,2\abs{m}(\abs{m}+2)\}$ we have
\begin{align*}
 \abs{b_{n,n-2k}}&\leq \text{\small $\frac{2\abs{m}}{n-2k+1+\nu}$}\abs{b_{n,n-2k+2}}+\text{\small $\frac{n-2k+3-\nu}{n-2k+1+\nu}$}\abs{b_{n,n-2k+4}}\\
 &\overset{\text{(1)}}{\leq} \text{\small $\frac{2\abs{m}}{n-2k+1+\nu}$}\abs{c_{n-2k+2}}+\text{\small $\frac{n-2k+3-\nu}{n-2k+1+\nu}$}\abs{c_{n-2k+4}}\\
 &=\text{\small $\frac{2\abs{m}\cdot 4}{(n-2k+1+\nu)(n-2k+3+\nu)}$}\Pi_{\ell=1}^{k-1}\big(1+\text{\small $\frac{\abs{m}+2-\nu}{n-2\ell-1+\nu}$}\big)\\
 &+\text{\small $\frac{4(n-2k+3-\nu)}{(n-2k+1+\nu)(n-2k+5+\nu)}$}\Pi_{\ell=1}^{k-2}\big(1+\text{\small $\frac{\abs{m}+2-\nu}{n-2\ell-1+\nu}$}\big)\\
 &= \text{\small $\frac{4}{n-2k+1+\nu}$}\Pi_{\ell=3}^{k-2}\big(1+\text{\small $\frac{\abs{m}+2-\nu}{n-2\ell-1+\nu}$}\big)\\
 &\Big(\text{\small $\frac{2\abs{m}}{n-2k+3+\nu}$}\big(1+\text{\small $\frac{m+2-\nu}{n-2k+1+\nu}$}\big)+\text{\small $\frac{n-2k+3-\nu}{n-2k+5+\nu}$}\Big).
\end{align*}
In $(1)$ we use the assumption \eqref{for:13}.

Then to prove that $\abs{b_{n,n-2k}}\leq c_{n-2k}$ it suffices to show that
\begin{align}\label{for:10}
&\text{\small $\frac{2\abs{m}}{n-2k+3+\nu}$}\big(1+\text{\small $\frac{\abs{m}+2-\nu}{n-2k+1+\nu}$}\big)+\text{\small $\frac{n-2k+3-\nu}{n-2k+5+\nu}$}\notag\\
&\leq \Pi_{\ell=k-1}^{k}\big(1+\text{\small $\frac{\abs{m}+2-\nu}{n-2\ell-1+\nu}$}\big).
\end{align}
If $2-\abs{m}\geq\nu$ we have
\begin{align*}
 &\text{\small $\frac{n-2k+3-\nu}{n-2k+5+\nu}$}\\
 &=1-\text{\small $\frac{2\nu+2}{n-2k+5+\nu}$}\leq 1-\text{\small $\frac{2\nu-4}{n-2k-1+\nu}$}\\
 &\overset{\text{(1)}}{\leq} \big(1+\text{\small $\frac{\abs{m}+2-\nu}{n-2k-1+\nu}$}\big)\big(1+\text{\small $\frac{\abs{m}+2-\nu}{n-2k-1+\nu}$}-\text{\small $\frac{2\abs{m}}{n-2k-1+\nu}$}\big)\\
 &\leq\big(1+\text{\small $\frac{\abs{m}+2-\nu}{n-2k+1+\nu}$}\big)\big(1+\text{\small $\frac{\abs{m}+2-\nu}{n-2k-1+\nu}$}-
 \text{\small $\frac{2\abs{m}}{n-2k+3+\nu}$}\big).
\end{align*}
Here in $(1)$ we use the relation: $(1+a)(1+b)\geq 1+a+b$ if $ab>0$. Hence we get \eqref{for:10}.

If $\nu>2-\abs{m}$, we have
\begin{align*}
 &1-\text{\small $\frac{2\nu+2}{n-2k+5+\nu}$}\\
 &=1-\text{\small $\frac{2\nu-4}{n-2k-1+\nu}$}+\big(\text{\small $\frac{2\nu-4}{n-2k-1+\nu}$}-\text{\small $\frac{2\nu+2}{n-2k+5+\nu}$}\big)\\
 &=1-\text{\small $\frac{2\nu-4}{n-2k-1+\nu}$}+\text{\small $\frac{-6(n-2k-\nu+3)}{(n-2k-1+\nu)(n-2k+5+\nu)}$}\\
 &\overset{\text{(1)}}{\leq} 1-\text{\small $\frac{2\nu-4}{n-2k-1+\nu}$}+\text{\small $\frac{-(\abs{m}+2-\nu)(\abs{m}-2+\nu)}{(n-2k-1+\nu)^2}$}
 \end{align*}
 \begin{align*}
  &= \big(1+\text{\small $\frac{\abs{m}+2-\nu}{n-2k-1+\nu}$}\big)\big(1+\text{\small $\frac{\abs{m}+2-\nu}{n-2k-1+\nu}$}-\text{\small $\frac{2\abs{m}}{n-2k-1+\nu}$}\big)\\
 &\leq\big(1+\text{\small $\frac{\abs{m}+2-\nu}{n-2k+1+\nu}$}\big)\big(1+\text{\small $\frac{\abs{m}+2-\nu}{n-2k-1+\nu}$}-
 \text{\small $\frac{2\abs{m}}{n-2k+3+\nu}$}\big).
\end{align*}
Here in $(1)$ we use that
\begin{align*}
 n-2k+5+\nu\leq 2(n-2k-1+\nu),
\end{align*}
if $n-2k-\nu>7$; and
\begin{align*}
 n-2k-\nu>(\abs{m}+2-\nu)(\abs{m}-2+\nu),
\end{align*}
if $n-2k-\nu>2|m|(|m|+2)$. Hence we finish the proof.
\end{proof}

\begin{corollary}\label{le:1}
Suppose $\nu\in (-1,1)\backslash 0$. If $n\geq0$ and $n-2k\geq 0$ then
\begin{align*}
 \abs{b_{n,n-2k}}\text{\small$\frac{\norm{u_{n-2k}}}{\norm{u_n}}$}\leq C_mn ^{\frac{\abs{m}+2}{2}}.
\end{align*}
\end{corollary}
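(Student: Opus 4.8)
The plan is to follow the route of Corollary \ref{le:8} (the principal-series case), the new feature being that here $\norm{u_{n-2k}}\ne\norm{u_n}$, so one must track the growth of the norm ratio and see that it is exactly absorbed by the $-\nu$ in the exponent of the bound on $c_{n-2k}$ supplied by Lemma \ref{le:6}. By \eqref{for:19} it is enough to consider $n\ge 0$ and $n-2k\ge 0$ (in fact $n-2k\ge 2$, since $n\in I_\nu\backslash\mathcal{S}_\nu$), and we may assume $n$ is large, the remaining finitely many $(n,k)$ being harmless. By Lemma \ref{le:10}, for $\nu\in(-\mathfrak{u}_0,\mathfrak{u}_0)\backslash 0$ one has $\norm{u_j}^2\approx(1+j)^{-\nu}$ with implied constants depending only on $\mathfrak{u}_0$, whence
\[
\frac{\norm{u_{n-2k}}}{\norm{u_n}}\ \le\ C\Big(\frac{1+n}{1+n-2k}\Big)^{\nu/2}\ \le\ C\,\frac{n^{\nu/2}}{(n-2k)^{\nu/2}},
\]
for either sign of $\nu$ (using $\abs{\nu}<1$ and $1\le n-2k\le n$).

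In the bulk range $n-2k-\nu>\max\{7,2\abs{m}(\abs{m}+2)\}$, Lemma \ref{le:6} gives $\abs{b_{n,n-2k}}\le c_{n-2k}$, and one estimates the product defining $c_{n-2k}$ exactly as in Corollary \ref{le:8}: using $\log(1+x)\le x$ and comparison of $\sum_{\ell=1}^k(n-2\ell-1+\nu)^{-1}$ with $\int_1^{k+1}(n-2t-1+\nu)^{-1}\,dt=\tfrac12\log\tfrac{n-3+\nu}{n-2k-3+\nu}$, and noting that $n-2k-3+\nu$ is comparable to $n-2k$ here, one obtains
\[
c_{n-2k}\ \le\ C_m\,\frac{n^{(\abs{m}+2-\nu)/2}}{(n-2k)^{\,1+(\abs{m}+2-\nu)/2}}.
\]
Multiplying by the norm ratio and using $\tfrac{\abs{m}+2-\nu}{2}+\tfrac{\nu}{2}=\tfrac{\abs{m}+2}{2}$, all powers of $n-2k$ collect into $(n-2k)^{-1-(\abs{m}+2)/2}\le 1$, so $\abs{b_{n,n-2k}}\,\norm{u_{n-2k}}/\norm{u_n}\le C_m n^{(\abs{m}+2)/2}$ on this range.

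For the finitely many indices with $2\le n-2k\le M_m:=1+\max\{7,2\abs{m}(\abs{m}+2)\}$ I would argue by a descending recursion. Let $q$ be the smallest even integer exceeding $M_m$; then $q,q+2$ lie in the bulk range, so by the previous step $\abs{b_{n,q}},\abs{b_{n,q+2}}\le C_m n^{(\abs{m}+2-\nu)/2}$ (there $n/q$ is comparable to $n$). Starting from these two values and running the recursion of Section \ref{sec:2} in the form $b_{n,p-2}=\tfrac{p+1-\nu}{p-1+\nu}b_{n,p+2}-\tfrac{2m}{p-1+\nu}b_{n,p}$ with $p$ decreasing by $2$ down to $p=4$, each of the $O_m(1)$ steps multiplies the current bound by a constant depending only on $m$ (since $p-1+\nu\ge 3-\mathfrak{u}_0>2$ keeps the coefficients bounded uniformly in $\nu$), so $\abs{b_{n,n-2k}}\le C_m n^{(\abs{m}+2-\nu)/2}$ on all of $2\le n-2k\le M_m$. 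Multiplying by $\norm{u_{n-2k}}/\norm{u_n}\le C_m n^{\nu/2}$ (legitimate now that $n-2k$ is bounded) once more yields $C_m n^{(\abs{m}+2)/2}$, and combining the two ranges finishes the proof.

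The one place that demands care is keeping the exponents honest: one must verify that the bound on $c_{n-2k}$ really carries the factor $n^{(\abs{m}+2-\nu)/2}$ and not $n^{(\abs{m}+2)/2}$, because it is precisely this ``$-\nu$'' that cancels the growth $n^{\nu/2}$ of the norm ratio and leaves the final exponent independent of $\nu$ --- which is what makes the estimate uniform over $\nu\in(-\mathfrak{u}_0,\mathfrak{u}_0)$. Ancillary points to watch are that the hypothesis of Lemma \ref{le:6} (together with integrality of $n-2k$) keeps every denominator $n-2\ell-1+\nu$ in the product bounded below, and that the accumulated $O_m(1)$ constant in the descending recursion is controlled uniformly in $\nu$.
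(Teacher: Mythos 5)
Your proposal is correct and follows essentially the same route as the paper's proof: bound $c_{n-2k}$ via $\log(1+x)\le x$ and an integral comparison, bound $\norm{u_{n-2k}}/\norm{u_n}$ by Lemma \ref{le:10}, and observe that the $-\nu$ in the exponent cancels the $\nu/2$ growth of the norm ratio before invoking Lemma \ref{le:6}. Your explicit descending-recursion treatment of the finitely many indices with $n-2k$ below the threshold of Lemma \ref{le:6} is a detail the paper absorbs silently into $C_m$, so it is a welcome (but not divergent) addition.
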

\begin{proof}
Similar to the proof of Corollary \ref{le:8},  by using $\log(1+x)\leq x$ and integral inequalities we have
\begin{align*}
 \abs{c_{n-2k}}&\leq C_m\big(\text{\small $\frac{n-1+\nu}{n-2k-1+\nu}$}\big)^{\frac{\abs{m}+2-\nu}{2}}.
\end{align*}
By Lemma \ref{le:10} we have
\begin{align*}
 \text{\small$\frac{\norm{u_{n-2k}}}{\norm{u_n}}$}\leq C\big(\text{\small$\frac{n-2k+1}{n+1}$}\big)^{-\frac{\nu}{2}}.
\end{align*}
The above estimates and Lemma \ref{le:6} imply the conclusion.
\end{proof}
\subsection{Upperbounds for $\nu\in \ZZ^+$} Let
\begin{align*}
c_{n-2k}=\text{\small $\frac{2}{[(n-2k+1)^2-\nu^2]^{\frac{1}{2}}}$}\Pi_{\ell=1}^{k}(1+\text{\small $\frac{\abs{m}}{n-2\ell-1-\nu}$}).
\end{align*}
\begin{lemma}\label{le:9}
Suppose $\nu\in \ZZ^+$ and $n-2k-\nu>\max\{6,\,2|m|^2+2\}$. Then
\begin{align*}
  \abs{b_{n,n-2k}}\text{\small$\frac{\norm{u_{n-2k}}}{\norm{u_n}}$}\leq c_{n-2k}.
\end{align*}
\end{lemma}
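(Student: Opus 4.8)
The plan is to run exactly the same induction-plus-recursion scheme used in Lemma \ref{le:5} and Lemma \ref{le:6}, now with the discrete-series weights. The key structural difference is that in the holomorphic discrete series ($\nu\in\ZZ^+$) the norm ratio $\norm{u_{n-2k}}/\norm{u_n}$ is genuinely nontrivial, so I would fold it into the quantity being estimated from the start rather than tacking it on at the end. Concretely, set $\tilde b_{n,n-2k}=\abs{b_{n,n-2k}}\,\norm{u_{n-2k}}/\norm{u_n}$ and rewrite the recursion \eqref{for:2} for $b_{n,n-2k}$ as a recursion for $\tilde b_{n,n-2k}$; using \eqref{for:18}, the coefficients $\tfrac{n-2k+3-\nu}{n-2k+1+\nu}$ and $\tfrac{2m}{n-2k+1+\nu}$ acquire the extra factors $\norm{u_{n-2k}}/\norm{u_{n-2k+2}}$ and $\norm{u_{n-2k}}/\norm{u_{n-2k+4}}$ respectively, which by \eqref{for:18} are $\bigl(\tfrac{n-2k+1-\nu}{n-2k+1+\nu}\bigr)^{1/2}$ and the corresponding two-step product. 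This is why the candidate bound $c_{n-2k}$ has the symmetrized denominator $[(n-2k+1)^2-\nu^2]^{1/2}=[(n-2k+1-\nu)(n-2k+1+\nu)]^{1/2}$ instead of the single linear factor appearing in the $\nu\in i\RR$ case.

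First I would verify the base cases $k=1,2$ directly from \eqref{for:12}: after multiplying by the norm ratios one checks $\tilde b_{n,n-2}\le c_{n-2}$ and $\tilde b_{n,n-4}\le c_{n-4}$, which amounts to an elementary comparison of the explicit constants against $\tfrac{2}{[(n-1)^2-\nu^2]^{1/2}}(1+\tfrac{\abs m}{n-3-\nu})$ and its two-term analogue, valid once $n-2k-\nu>\max\{6,2\abs m^2+2\}$. Then I would assume $\tilde b_{n,n-2r}\le c_{n-2r}$ for $2\le r\le k-1$, substitute into the (rescaled) recursion, factor out $\tfrac{2}{[(n-2k+1)^2-\nu^2]^{1/2}}\Pi_{\ell=1}^{k-2}(1+\tfrac{\abs m}{n-2\ell-1-\nu})$ just as in the displayed computation inside Lemma \ref{le:6}, and reduce to the single scalar inequality
\begin{align*}
\text{\small $\frac{2\abs{m}}{[(n-2k+3)^2-\nu^2]^{1/2}}$}\big(1+\text{\small $\frac{\abs m}{n-2k+1-\nu}$}\big)+\text{\small $\frac{[(n-2k+3)^2-\nu^2]^{1/2}}{[(n-2k+5)^2-\nu^2]^{1/2}}$}\leq\Pi_{\ell=k-1}^{k}\big(1+\text{\small $\frac{\abs m}{n-2\ell-1-\nu}$}\big),
\end{align*}
with the norm-ratio factors already incorporated. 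The inequality to beat is the discrete-series avatar of \eqref{for:34}: for $x=n-2k+1$ large (here $x-\nu>\max\{6,2\abs m^2+2\}$),
\begin{align*}
\text{\small $\frac{\sqrt{(x+2)^2-\nu^2}}{\sqrt{(x+4)^2-\nu^2}}$}\leq 1-\text{\small $\frac{m^2}{(x)^2-\nu^2}$}\leq\big(1-\text{\small $\frac{\abs m}{\sqrt{x^2-\nu^2}}$}\big)\big(1+\text{\small $\frac{\abs m}{\sqrt{x^2-\nu^2}}$}\big),
\end{align*}
which follows from $\sqrt{1-y}\le 1-\tfrac12 y$ together with the lower bound on $x-\nu$; one then absorbs the difference between $\sqrt{x^2-\nu^2}$ and the shifted radicals using $(1+a)(1+b)\ge 1+a+b$ for $ab>0$, exactly as in the two previous lemmas. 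This closes the induction and gives $\tilde b_{n,n-2k}\le c_{n-2k}$.

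The main obstacle is bookkeeping rather than conceptual: one must be careful that the norm ratios $\norm{u_{n-2k}}/\norm{u_n}$ are only comparable to a power of $(n-2k)/n$ via Lemma \ref{le:10} in the regime $k\ge\ell\ge i_\nu=\nu+1$, so the whole argument is confined to indices with $n-2k\ge i_\nu$, and the "boundary" terms near $n-2k\approx\nu$ (where Lemma \ref{le:7} degenerates to $(X+m)u_n=mu_n+nu_{n+2}$) have to be handled by the threshold $n-2k-\nu>\max\{6,2\abs m^2+2\}$ rather than absorbed into the recursion. I expect the trickiest single estimate to be controlling $\tfrac{\sqrt{(x+2)^2-\nu^2}}{\sqrt{(x+4)^2-\nu^2}}$ against $1-\tfrac{m^2}{x^2-\nu^2}$ uniformly in $\nu\in\ZZ^+$ with $\nu<x-\max\{6,2\abs m^2+2\}$; this is where the precise form of the threshold is dictated, and it is the direct analogue of step $(2)$ in the proof of Lemma \ref{le:5}. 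Once Lemma \ref{le:9} is in hand, Corollary-type consequences for $\nu\in\ZZ^+$ (the analogue of Corollaries \ref{le:8} and \ref{le:1}, feeding into Proposition \ref{po:1}) follow by the same $\log(1+x)\le x$ plus integral-comparison estimate on $\Pi_{\ell=1}^k(1+\tfrac{\abs m}{n-2\ell-1-\nu})$, together with the bound $\norm{u_{n-2k}}/\norm{u_n}\le C\bigl(\tfrac{n-2k+1}{n+1}\bigr)^{-\nu/2}$ from Lemma \ref{le:10}.
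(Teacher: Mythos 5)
Your proposal follows essentially the same route as the paper: fold the norm ratio $\norm{u_{n-2k}}/\norm{u_n}$ into the inducted quantity via \eqref{for:18}, verify $k=1,2$ from \eqref{for:12}, factor out $\tfrac{2}{[(n-2k+1)^2-\nu^2]^{1/2}}\Pi_{\ell=1}^{k-2}(1+\tfrac{\abs m}{n-2\ell-1-\nu})$, and reduce to the same scalar inequality \eqref{for:5}, settled by the same $\sqrt{1-y}\le 1-\tfrac12 y$ and $(1+a)(1+b)\ge 1+a+b$ manipulations under the threshold $n-2k-\nu>\max\{6,2\abs m^2+2\}$. The only deviations (the exact indexing of the intermediate radical inequality and the range of the induction hypothesis) are cosmetic and do not change the argument.
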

\begin{proof}
We prove by induction. By \eqref{for:18} and \eqref{for:12} it is easy to check that
\begin{align*}
 \abs{b_{n,n-2k}}\text{\small$\frac{\norm{u_{n-2k}}}{\norm{u_n}}$}\leq c_{n-2k},\qquad k=1,\,2.
\end{align*}
if $n-2k-\nu>\max\{7,\,2\abs{m}(\abs{m}+2)\}$.

Suppose
\begin{align}\label{for:14}
 \abs{b_{n,n-2r}}\text{\small$\frac{\norm{u_{n-2r}}}{\norm{u_n}}$}\leq c_{n-2r}, \qquad 1\leq r\leq k-1,
\end{align}
where $n-2k-\nu>\max\{7,\,2\abs{m}(\abs{m}+2)\}$. Then
\begin{align*}
&\abs{b_{n,n-2k}}\text{\small$\frac{\norm{u_{n-2k}}}{\norm{u_n}}$}\\
&\leq \text{\small $\frac{2\abs{m}}{n-2k+1+\nu}$}\abs{b_{n,n-2k+2}}\text{\small$\frac{\norm{u_{n-2k}}}{\norm{u_n}}$}\\
 &+\text{\small $\frac{n-2k+3-\nu}{n-2k+1+\nu}$}\abs{b_{n,n-2k+4}}\text{\small$\frac{\norm{u_{n-2k}}}{\norm{u_n}}$}\\
 &\overset{\text{(1)}}{=} \text{\small $\frac{2\abs{m}}{n-2k+1+\nu}$}\big(\text{\small$\frac{n-2k+1+\nu}{n-2k+1-\nu}$}\big)^{\frac{1}{2}}\abs{b_{n,n-2k+2}}\text{\small$\frac{\norm{u_{n-2k+2}}}{\norm{u_n}}$}\\
 &+\text{\small $\frac{n-2k+3-\nu}{n-2k+1+\nu}$}\big(\text{\small$\frac{n-2k+1+\nu}{n-2k+1-\nu}\cdot \frac{n-2k+3+\nu}{n-2k+3-\nu}$}\big)^{\frac{1}{2}}\abs{b_{n-2k+4}}\text{\small$\frac{\norm{u_{n-2k+4}}}{\norm{u_n}}$}\\
 &\overset{\text{(2)}}{\leq}\text{\small $\frac{2\abs{m}}{n-2k+1+\nu}$}\big(\text{\small$\frac{n-2k+1+\nu}{n-2k+1-\nu}$}\big)^{\frac{1}{2}}\abs{c_{n-2k+2}}\\
 &+\text{\small $\frac{n-2k+3-\nu}{n-2k+1+\nu}$}\big(\text{\small$\frac{n-2k+1+\nu}{n-2k+1-\nu}\cdot \frac{n-2k+3+\nu}{n-2k+3-\nu}$}\big)^{\frac{1}{2}}\abs{c_{n-2k+4}}\\
 &= \text{\small $\frac{2}{[(n-2k+1)^2-\nu^2]^{\frac{1}{2}}}$}\Pi_{\ell=1}^{k-2}\big(1+\text{\small $\frac{\abs{m}}{n-2\ell-1-\nu}$}\big)\\
 &\cdot\Big(\text{\small $\frac{2\abs{m}}{[(n-2k+3)^2-\nu^2]^{\frac{1}{2}}}$}\big(1+\text{\small $\frac{m}{n-2k+1-\nu}$}\big)\\
 &+\text{\small $\frac{[(n-2k+3)^2-\nu^2]^{\frac{1}{2}}}{[(n-2k+5)^2-\nu^2]^{\frac{1}{2}}}$}\Big).
\end{align*}
In $(1)$ we use \eqref{for:18}; in $(2)$ we use the assumption \eqref{for:14}.

Then to prove that $\abs{b_{n,n-2k}}\text{\small$\frac{\norm{u_{n-2k}}}{\norm{u_n}}$}\leq c_{n-2k}$ it suffices to show that
\begin{align}\label{for:5}
&\text{\small $\frac{2\abs{m}}{[(n-2k+3)^2-\nu^2]^{\frac{1}{2}}}$}\big(1+\text{\small $\frac{\abs{m}}{n-2k+1-\nu}$}\big)+\text{\small $\frac{[(n-2k+3)^2-\nu^2]^{\frac{1}{2}}}{[(n-2k+5)^2-\nu^2]^{\frac{1}{2}}}$}\notag\\
&\leq \Pi_{\ell=k-1}^{k}\big(1+\text{\small $\frac{\abs{m}}{n-2\ell-1-\nu}$}\big).
\end{align}
For any $b\in\RR$ and $x\geq \max\{2|m|^2+2, \abs{b}+6\}$ and  we have
\begin{align*}
 \text{\small $\frac{\sqrt{x^2-b^2}}{\sqrt{(x+2)^2-b^2}}$}&=\text{\small $\sqrt{1-\frac{4x+4}{(x+2)^2-b^2}}$}\overset{\text{(1)}}{\leq} 1-\text{\small $\frac{2x+2}{(x+2)^2-b^2}$}\\
 &\overset{\text{(2)}}{\leq} 1-\text{\small $\frac{m^2}{(x+2)^2-3(2(x+2)-m^2)-b^2}$}\overset{\text{(3)}}{\leq} 1-\text{\small $\frac{|m|^2}{x^2-b^2}$}\\
 &= \big(1+\text{\small $\frac{\abs{m}}{\sqrt{x^2-b^2}}$}\big)\big(1+\text{\small $\frac{\abs{m}}{\sqrt{x^2-b^2}}$}-\text{\small $\frac{2\abs{m}}{\sqrt{x^2-b^2}}$}\big)\\
  &\overset{\text{(4)}}{\leq} \big(1+\text{\small $\frac{\abs{m}}{x-b}$}\big)\big(1+\text{\small $\frac{\abs{m}}{x-b}$}-\text{\small $\frac{2\abs{m}}{\sqrt{x^2-b^2}}$}\big)\\
 &\leq \big(1+\text{\small $\frac{\abs{m}}{x-2-b}$}\big)\big(1+\text{\small $\frac{\abs{m}}{x-4-b}$}-\text{\small $\frac{2\abs{m}}{\sqrt{x^2-b^2}}$}\big).
\end{align*}
In $(1)$ we use the inequality $\sqrt{1-y}\leq 1-\frac{1}{2}y$ if $0\leq y\leq 1$; in $(2)$ we use the fact that
\begin{align*}
  \text{\small $\frac{2x+2}{(x+2)^2-b^2}$}<\frac{1}{3}\quad\text{ and }\quad (x+2)^2-3(2(x+2)-m^2)-b^2>0;
\end{align*}
if $x\geq \abs{b}+6$; in $(3)$ we use the fact that if $x>2m^2$ then
\begin{align*}
 (x+2)^2-3(2(x+2)-m^2)<x^2;
\end{align*}
in $(4)$ we use that $\frac{1}{\sqrt{x^2-b^2}}\leq \frac{1}{x-b}$ if $x>b>0$.

Then \eqref{for:5} follows immediately from the above inequality by letting $b=\nu$ and $x=n-2k+3$. Hence we finish the proof.
\end{proof}
\begin{corollary}\label{le:2}
Suppose $\nu\in \ZZ^+$. If $n\geq 1+\nu$ and $n-2k\geq 1+\nu$ then
\begin{align*}
 \abs{b_{n,n-2k}}\text{\small$\frac{\norm{u_{n-2k}}}{\norm{u_n}}$}\leq C_mn ^{\frac{\abs{m}}{2}}.
\end{align*}
\end{corollary}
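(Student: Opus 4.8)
The plan is to argue exactly as in the proof of Corollary \ref{le:8} and Corollary \ref{le:1}, using Lemma \ref{le:9} as the main input and handling the finitely many ``small'' indices by hand. First I would fix the threshold $N_m=\max\{6,\,2|m|^2+2\}$ appearing in Lemma \ref{le:9} and split into two regimes according to whether $n-2k-\nu>N_m$ or $n-2k-\nu\leq N_m$.

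For the main regime $n-2k-\nu>N_m$, Lemma \ref{le:9} gives $\abs{b_{n,n-2k}}\tfrac{\norm{u_{n-2k}}}{\norm{u_n}}\leq c_{n-2k}$ with
\begin{align*}
c_{n-2k}=\text{\small $\frac{2}{[(n-2k+1)^2-\nu^2]^{\frac{1}{2}}}$}\Pi_{\ell=1}^{k}\big(1+\text{\small $\frac{\abs{m}}{n-2\ell-1-\nu}$}\big),
\end{align*}
so it remains only to bound the product. Taking logarithms and using $\log(1+x)\leq x$ together with the integral comparison $\sum_{\ell=1}^{k}\tfrac{1}{n-2\ell-1-\nu}\leq \int_{0}^{k}\tfrac{d\ell}{n-2\ell-1-\nu}=\tfrac12\log\big(\tfrac{n-1-\nu}{n-2k-1-\nu}\big)$, exactly as in Corollary \ref{le:8}, yields
\begin{align*}
\Pi_{\ell=1}^{k}\big(1+\text{\small $\frac{\abs{m}}{n-2\ell-1-\nu}$}\big)\leq \big(\text{\small $\frac{n-1-\nu}{n-2k-1-\nu}$}\big)^{\frac{\abs{m}}{2}}.
\end{align*}
Since $n-2k-\nu>N_m$ the prefactor $[(n-2k+1)^2-\nu^2]^{-1/2}$ and the denominator $(n-2k-1-\nu)^{\abs{m}/2}$ are both bounded below by positive constants depending only on $m$ (as $\nu\in\ZZ^+$ is fixed), so $c_{n-2k}\leq C_m(n-1-\nu)^{\abs{m}/2}\leq C_m n^{\abs{m}/2}$, which is the claimed bound.

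For the remaining regime $1+\nu\leq n-2k$ with $n-2k-\nu\leq N_m$, there are only boundedly many (in terms of $m$) admissible values of $n-2k$, so one invokes the recursion \eqref{for:2} a bounded number of times to express $b_{n,n-2k}$ in terms of $b_{n,n-2j}$ for the largest index $n-2j$ with $n-2j-\nu>N_m$ (if $n-\nu\leq N_m$ itself, then $n$ is bounded and the estimate is trivial with a constant $C_m$ absorbing these finitely many cases). Each application of the recursion multiplies the ratio $\abs{b}\tfrac{\norm{u_\cdot}}{\norm{u_n}}$ by a factor that is $O_m(1)$ once we account for the norm ratios \eqref{for:18}, which near the bottom of the discrete series are bounded by constants depending only on $\nu$ and hence only on $m$; so the bound from the main regime propagates down with only a multiplicative $C_m$ loss. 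The main obstacle here is purely bookkeeping: making sure the norm-ratio factors $\big(\tfrac{n-2k+1+\nu}{n-2k+1-\nu}\big)^{1/2}$ that appeared in the proof of Lemma \ref{le:9} stay under control when $n-2k$ is within $N_m$ of $\nu$, i.e. when $(n-2k+1)^2-\nu^2$ can be as small as $O_m(1)$ — but since this quantity is still bounded below by a positive constant depending only on $m$ and $\nu$, no genuine difficulty arises. Combining the two regimes gives the corollary.
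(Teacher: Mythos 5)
Your main-regime argument coincides with the paper's proof, which is exactly the one-line argument ``as in Corollary \ref{le:8}, use $\log(1+x)\leq x$ and integral inequalities to get $\abs{c_{n-2k}}\leq C_m\big(\tfrac{n-\nu}{n-2k-\nu}\big)^{\abs{m}/2}$, then combine with Lemma \ref{le:9}''. What you add is an explicit treatment of the finitely many indices with $1\leq n-2k-\nu\leq\max\{6,\,2|m|^2+2\}$, where Lemma \ref{le:9} is silent; the paper leaves this regime implicit, so your two-regime structure is a sensible (indeed necessary) completion of the same route, carried out by running the recursion \eqref{for:2} a bounded number of steps below the threshold.

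Two points in your write-up need repair. First, your integral comparison is stated with the wrong limits: $\ell\mapsto(n-2\ell-1-\nu)^{-1}$ is \emph{increasing}, so $\sum_{\ell=1}^{k}(n-2\ell-1-\nu)^{-1}\leq\int_{1}^{k+1}(n-2\ell-1-\nu)^{-1}\,d\ell$, not $\int_{0}^{k}$ (already for $n-\nu=9$, $k=1$ your displayed inequality fails). With the corrected limits the product is bounded by $\big(\tfrac{n-3-\nu}{n-2k-3-\nu}\big)^{\abs{m}/2}$, which in the regime $n-2k-\nu>\max\{6,\,2|m|^2+2\}$ still gives $c_{n-2k}\leq C_m n^{\abs{m}/2}$, so this is only a slip. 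Second, and more substantively, you repeatedly allow constants to depend on $\nu$ (``bounded by constants depending only on $\nu$ and hence only on $m$'', ``a positive constant depending only on $m$ and $\nu$''). This reasoning is not valid: $\nu\in\ZZ^+$ is a representation parameter independent of $m$ and ranges over all of $\ZZ^+$, and the constant $C_m$ in the corollary must be uniform in $\nu$, since these bounds are glued over the direct integral \eqref{for:1} to produce the global estimates. Your small-regime step does yield uniform constants, but only after combining each recursion coefficient with the norm ratio from \eqref{for:18}: one step down multiplies $\abs{b_{n,\cdot}}\norm{u_\cdot}/\norm{u_n}$ by the factors $\tfrac{2\abs{m}}{[(n-2k+1)^2-\nu^2]^{1/2}}$ and $\tfrac{[(n-2k+3)^2-\nu^2]^{1/2}}{[(n-2k+1)^2-\nu^2]^{1/2}}$, and since $(n-2k+1)^2-\nu^2\geq 4\nu+4\geq 8$ while $n-2k\leq\nu+\max\{6,\,2|m|^2+2\}$ in this regime, both factors are bounded by constants depending only on $m$, uniformly in $\nu$. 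With these corrections your proof is complete and consistent with the paper's.
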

\begin{proof}
Similar to the proof of Corollary \ref{le:8}, by using $\log(1+x)\leq x$ and integral inequalities we have
\begin{align*}
 \abs{c_{n-2k}}\leq C_m(\text{\small$\frac{n-\nu}{n-2k-\nu}$})^{\frac{\abs{m}}{2}},
\end{align*}
if $1\leq k\leq \frac{1}{2}(n-1-\nu)$. This and Lemma \ref{le:9} imply the conclusion.
\end{proof}
It is clear that Proposition \ref{po:1} is a direct consequence of Corollary \ref{le:8}, \ref{le:1} and \ref{le:2}.
\section{Invariant distributions and Sobolev norms of the solution}
\begin{proposition}\label{le:11}
For any $n\in\ZZ\backslash 0$ and $t\geq 0$ we have
\begin{align*}
 \norm{f_{\{n\}}}_t\leq C_m(1+\mu+2n^2)^\frac{t}{2}|n| ^{\frac{\abs{m}+3}{2}}\norm{u_n}.
\end{align*}
\end{proposition}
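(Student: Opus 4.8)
The plan is to expand $f_{\{n\}}$ in the adapted basis $\{u_k\}$, insert this into the explicit Sobolev norm formula \eqref{for:9}, and estimate term by term using Proposition \ref{po:1}, together with a crude count of how many Fourier coefficients are nonzero.

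First I would dispose of the trivial cases: if $n\notin I_\nu$ or $n\in\mathcal{S}_\nu$ then, by the observation following Definition \ref{de:1}, $f_{\{n\}}=0$ and there is nothing to prove, so assume $n\in I_\nu\setminus\mathcal{S}_\nu$. Next I would reduce to $n>0$. When $n<0$, the relation \eqref{for:19} gives $b_{n,n+2k}=b_{-n,-n-2k}$, while \eqref{for:18} shows that $\norm{u_j}$ depends only on $\abs{j}$; since the weights $1+\mu+2j^2$ in \eqref{for:9} are also even in $j$, this yields $\norm{f_{\{n\}}}_t=\norm{f_{\{-n\}}}_t$, so it suffices to treat $n>0$.

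For $n>0$, write $f_{\{n\}}=\sum_k b_{n,n-2k}u_{n-2k}$ as in \eqref{for:44}. The sum runs over $k\geq1$ with $2k<n$ and $n-2k\in I_\nu$, so every occurring index $j=n-2k$ satisfies $1\leq j\leq n-2$, in particular $\abs{j}<\abs{n}$; moreover there are at most $\abs{n}$ such indices. By \eqref{for:9} and $\norm{u_{n-2k}}^2=\abs{\Pi_{\nu,\abs{n-2k}}}$,
\begin{align*}
 \norm{f_{\{n\}}}_t^2=\sum_k\big(1+\mu+2(n-2k)^2\big)^t\abs{b_{n,n-2k}}^2\norm{u_{n-2k}}^2.
\end{align*}
In each term $(1+\mu+2(n-2k)^2)^t\leq(1+\mu+2n^2)^t$, because $\abs{n-2k}<\abs{n}$ and $1+\mu+2j^2>0$ throughout the relevant range of $j$ (for the holomorphic discrete series this positivity uses $j\geq i_\nu=\nu+1$, giving $1+\mu+2j^2\geq(\nu+2)^2$), while $\abs{b_{n,n-2k}}\norm{u_{n-2k}}\leq C_m\abs{n}^{\frac{\abs{m}+2}{2}}\norm{u_n}$ by Proposition \ref{po:1}. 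Summing the at most $\abs{n}$ terms gives
\begin{align*}
 \norm{f_{\{n\}}}_t^2\leq C_m\,(1+\mu+2n^2)^t\,\abs{n}^{\abs{m}+3}\,\norm{u_n}^2,
\end{align*}
and taking square roots yields the assertion.

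The substantive work is already contained in Proposition \ref{po:1}, i.e.\ the recursive estimates of the previous section; the remaining ingredients here are only the elementary facts that the spectral weights $1+\mu+2j^2$ are positive and monotone in $\abs{j}$ over the indices appearing in $f_{\{n\}}$ — which in the discrete series case is where the constraint $\abs{j}\geq i_\nu$ enters — and the bookkeeping that $f_{\{n\}}$ has $O(\abs{n})$ nonzero coefficients. Accordingly I do not anticipate a genuine obstacle: the proposition is essentially Proposition \ref{po:1} re-expressed in terms of Sobolev norms.
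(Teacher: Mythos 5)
Your argument is correct and is essentially the paper's own proof: expand $f_{\{n\}}$ in the basis, apply the Sobolev norm formula \eqref{for:9}, bound each weight by $(1+\mu+2n^2)^t$ and each coefficient by Proposition \ref{po:1}, and pay a factor $\abs{n}$ for the number of terms, with the reduction to $n>0$ via \eqref{for:19}. The only additions (the explicit positivity check of $1+\mu+2j^2$ in the discrete series and the disposal of $n\notin I_\nu\setminus\mathcal{S}_\nu$) are harmless refinements of the same computation.
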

\begin{proof}
By \eqref{for:9} and \eqref{for:19} we have
\begin{align*}
  \norm{f_{\{n\}}}_t^2&\leq\sum_{\substack{2\leq 2k\leq |n|\\ |n|-2k\in I_\nu}}(1+\mu+2(|n|-2k)^2)^t |b_{|n|,|n|-2k}|^2 \norm{u_{|n|-2k}}^2\\
  &\overset{\text{(1)}}{\leq} \sum_{\substack{2\leq 2k\leq |n|\\ n-2k\in I_\nu}} C_m(1+\mu+2n^2)^t|n| ^{|m|+2}\norm{u_n}^2\\
  &\leq C_m(1+\mu+2n^2)^t|n| ^{|m|+3}\norm{u_n}^2
\end{align*}
Here $(1)$ is a direct consequence of Proposition \ref{po:1}. Then we finish the proof.
\end{proof}
\subsection{Invariant distributions in $\mathcal{H}_\nu$}\label{sec:9} For $n\in \mathcal{S}^\delta_\nu$ the linear functional
\begin{align*}
  \text{\small$\mathcal{D}_{\nu,n}^{\delta,m}: g=\sum_{k\in I_\nu}g_ku_k$}&\to \text{\small$-g_n+\text{\small$\frac{\nu-n-1}{2}$}\sum_{j\in I_\nu\backslash \mathcal{S}^\delta_\nu} b_{j,n+2}g_j$}\\
  &+\text{\small$m\sum_{j\in I_\nu\backslash \mathcal{S}^\delta_\nu} b_{j,n}g_j
  +\text{\small$\frac{\nu+n-1}{2}$}\sum_{j\in I_\nu\backslash \mathcal{S}^\delta_\nu} b_{j,n-2}g_j$}
\end{align*}
is defined for any $g\in\mathcal{H}^{\frac{\abs{m}+8}{2}}_\nu$. We note that if $\nu\in \ZZ^+$, then $b_{\nu+1,\nu-1}=0$.
In fact, from \eqref{for:43} we have
\begin{align}\label{for:3}
 g+\sum_{n\in \mathcal{S}^\delta_\nu}\text{\small$\mathcal{D}_{\nu,n}^{\delta,m}$}(g)u_n=(X+m)\big(\sum_{n\in I_\nu\backslash \mathcal{S}^\delta_\nu}g_nf_{\{n\}}\big).
\end{align}

\begin{lemma}\label{le:3} Suppose $g\in\mathcal{H}^{s}_\nu$, $s\geq\frac{\abs{m}+8}{2}$. Then
\begin{align*}
 \norm{\mathcal{D}_{\nu,n}^{\delta,m}(g)u_n}_t\leq C_m\norm{g}_{\frac{\abs{m}+8}{2}+t}, \qquad t\leq s-\text{\small$\frac{\abs{m}+8}{2}$}.
\end{align*}

\end{lemma}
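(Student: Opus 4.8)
The plan is to estimate the single distribution $\mathcal{D}_{\nu,n}^{\delta,m}(g)$ termwise using its explicit formula from Section \ref{sec:9}, the bounds on the coefficients $b_{j,n\pm 2}$, $b_{j,n}$ coming from Proposition \ref{po:1}, and the Sobolev norm formula \eqref{for:9}. Writing $\mathcal{D}_{\nu,n}^{\delta,m}(g)$ as $-g_n$ plus three sums of the shape $c\sum_{j\in I_\nu\setminus\mathcal{S}^\delta_\nu} b_{j,n'}g_j$ with $n'\in\{n-2,n,n+2\}$ and $|c|\leq \tfrac12(|\nu|+|n|+1)\leq C_m$ (note $n\in\mathcal{S}^\delta_\nu$ is bounded in terms of $m$ and $\nu$, but $\nu$ may be large in the discrete series, so one must keep the $|\nu|$ dependence visible and absorb it through $\norm{u_{n'}}$), I would bound each sum by Cauchy--Schwarz after inserting the appropriate weights. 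Since $\norm{\mathcal{D}_{\nu,n}^{\delta,m}(g)u_n}_t = |\mathcal{D}_{\nu,n}^{\delta,m}(g)|\,\norm{u_n}_t$ and $\norm{u_n}_t \leq C_m$ for $n\in\mathcal{S}^\delta_\nu$ (again in the non-discrete case; in the discrete case $n=\nu\pm1$ and $\norm{u_n}_t\approx(1+|\nu|)^t$ times a $\nu$-dependent constant, which one tracks), the whole problem reduces to bounding the scalar $|\mathcal{D}_{\nu,n}^{\delta,m}(g)|$.

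The key step is the Cauchy--Schwarz estimate on a generic sum $\sum_j b_{j,n'} g_j$. I would write
\begin{align*}
 \Big|\sum_{j} b_{j,n'} g_j\Big|
 &\leq \sum_j |b_{j,n'}|\,\frac{\norm{u_{n'}}}{\norm{u_j}}\cdot\frac{\norm{u_j}}{\norm{u_{n'}}}|g_j|\\
 &\leq \Big(\sum_j |b_{j,n'}|^2\frac{\norm{u_{n'}}^2}{\norm{u_j}^2}(1+\mu+2j^2)^{-w}\Big)^{1/2}
        \Big(\sum_j (1+\mu+2j^2)^{w}\frac{\norm{u_j}^2}{\norm{u_{n'}}^2}|g_j|^2\Big)^{1/2},
\end{align*}
where $w=\tfrac{|m|+8}{2}+t$. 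The second factor is, up to the fixed constant $\norm{u_{n'}}^{-1}\leq C_m$ (or the tracked $\nu$-dependent quantity), exactly $\norm{g}_{w}$ by \eqref{for:9}. For the first factor I invoke Proposition \ref{po:1}, which gives $|b_{j,n'}|\tfrac{\norm{u_{n'}}}{\norm{u_j}}\leq C_m|j|^{\frac{|m|+2}{2}}$, so that the $j$-sum is dominated by $C_m\sum_j |j|^{|m|+2}(1+2j^2)^{-w}$. Since $w\geq \tfrac{|m|+8}{2}$, the exponent satisfies $2w-(|m|+2)\geq 6>1$, so the series converges and is bounded by a constant $C_m$ depending only on $m$. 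Combining the two factors yields $|\mathcal{D}_{\nu,n}^{\delta,m}(g)|\leq C_m\norm{g}_{\frac{|m|+8}{2}+t}$, and multiplying by $\norm{u_n}_t$ finishes the proof.

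The main obstacle is bookkeeping in the discrete series, where $\nu$ is unbounded: there one cannot simply say $\norm{u_n}\leq C_m$ for $n=\nu\pm1$, nor that the prefactor $\tfrac12(\nu\pm n-1)$ is $\leq C_m$. The resolution is that in the discrete series the coefficient $b_{\nu+1,\nu-1}=0$ (noted already in Section \ref{sec:9}), so the only surviving sums involve $b_{j,n}$ and $b_{j,n+2}$ (or $b_{j,n-2}$) with $|n|\geq|\nu|$; then Proposition \ref{po:1} and Lemma \ref{le:10} must be applied with the $\nu$-dependence of $\norm{u_j}/\norm{u_n}$ made explicit, and one checks that all the $\nu$-powers that appear cancel between the coefficient bound, the weights $\norm{u_j}^2$, and $\norm{u_n}_t$, leaving a clean $C_m$. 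One also has to confirm that the index restriction $j\in I_\nu\setminus\mathcal{S}^\delta_\nu$ together with $|j|\geq|\mathrm{Re}(\nu)|+2$ is exactly the range in which Proposition \ref{po:1} applies, so no boundary terms are lost; those finitely many excluded terms are handled trivially since each is a single coefficient times $g_j$. Everything else is the routine convergence estimate above.
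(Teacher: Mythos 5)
Your route is essentially the paper's: expand $\mathcal{D}_{\nu,n}^{\delta,m}(g)$ from its defining formula, control $\abs{b_{j,n'}}\,\norm{u_{n'}}/\norm{u_j}$ by Proposition \ref{po:1}, and apply Cauchy--Schwarz against the Sobolev weight so that one factor becomes $\norm{g}_{\frac{\abs{m}+8}{2}+t}$. The genuine gap is in how you dispose of the representation parameter. It is not true, even outside the discrete series, that $\norm{u_n}_t\leq C_m$ for $n\in\mathcal{S}^\delta_\nu$: for the principal series $\norm{u_n}_t^2=(1+\mu+2n^2)^{t}$ with $\mu=1+\abs{\nu}^2$ unbounded, and in the discrete series $\norm{u_{\nu+1}}_t\approx(\nu+2)^{t}$ while $\norm{u_{\nu+3}}^{-1}\approx(\nu+1)^{1/2}$. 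Since the constant in the lemma must depend on $m$ alone (this uniformity in $\mu$ is exactly what the direct-integral argument for Theorem \ref{th:1} consumes), your reduction ``bound the scalar, then multiply by $\norm{u_n}_t$'' does not close: once you bound the first Cauchy--Schwarz factor by an absolute constant $C_m$, an unabsorbed factor of size $(1+\mu)^{t/2}$ (principal) or $(\nu+2)^{t}(\nu+1)^{1/2}$ (discrete) remains. Your last paragraph asserts that the $\nu$-powers ``cancel,'' but the mechanism you point to (Proposition \ref{po:1} together with Lemma \ref{le:10} applied to $\norm{u_j}/\norm{u_n}$) is not where the compensation actually comes from.

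The compensation comes from an absorption into the weight of $g$, which is precisely what the paper's displayed chain does: every $j$ occurring in the sums satisfies $1+\mu+2j^2\geq\max\{\,1+\mu+2n^2,\;(\abs{\nu}+1)^2,\;j^2\,\}$ (in the discrete series because the summation range forces $\abs{j}\geq\nu+3$), so the factor $(1+\mu+2n^2)^{t/2}$, the ratios $\norm{u_n}/\norm{u_{n\pm2}}$ and the prefactor of size $\abs{m}+\abs{\nu}$ can all be traded for powers of $(1+\mu+2j^2)$ inside the $j$-sum; this absorption is what the extra $+2$ in the exponent $\frac{\abs{m}+8}{2}+t$ is for. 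In your bookkeeping this means the margin $2w-(\abs{m}+2)=6+2t$ cannot be used for convergence alone: the $2t$, and part of the $6$, must be spent absorbing $\norm{u_n}_t$ and the $\nu$-ratios, leaving only $\abs{j}^{-6}$-type decay, which is still summable, so the argument closes once this step is carried out. One point in your favor: the potentially dangerous prefactor $\frac{\nu+n-1}{2}=\nu$ in the discrete series multiplies $\sum_j b_{j,\nu-1}g_j$, which vanishes identically because $\nu-1\notin I_\nu$ (not merely $b_{\nu+1,\nu-1}=0$), so no large prefactor survives there.
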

\begin{proof}
We have
\begin{align*}
  &\text{\small$\norm{\mathcal{D}_{\nu,n}^{\delta,m}(g)u_n}_t$}\\
  &\overset{\text{(1)}}{\leq} C(1+u+2n^2)^{\frac{t}{2}}\text{\small$(\abs{m}+\abs{\nu}) \sum_{\ell=-2,0,2}\sum_{j\in I_\nu\backslash \mathcal{S}^\delta_\nu} |b_{j,n+\ell}g_j|\norm{u_n}$}\\
  &\leq
  C(1+u)^{\frac{t}{2}}\text{\small$(\abs{m}+\abs{\nu}) \sum_{\ell=-2,0,2}\sum_{j\in I_\nu\backslash \mathcal{S}^\delta_\nu} \big(|b_{j,n+\ell}|\text{\small$\frac{\norm{u_{n+\ell}}}{\norm{u_j}}$}$}\big)
  \text{\small$\frac{\norm{u_{n}}}{\norm{u_{n+\ell}}}$}(\abs{g_j}\norm{u_j})\\
  &\overset{\text{(2)}}{\leq} C_m(1+u)^{\frac{t}{2}}\text{\small$(\abs{m}+\abs{\nu}) \sum_{\ell=-2,0,2}\sum_{j\in I_\nu\backslash \mathcal{S}^\delta_\nu} $}
  \text{\small$\frac{\norm{u_{n}}}{\norm{u_{n+\ell}}}$}(\abs{j}^{\frac{\abs{m}+2}{2}}\abs{g_j}\norm{u_j})\\
  &\overset{\text{(3)}}{\leq} C_m(1+u)^{\frac{t}{2}}(\abs{\nu}+1)^2\text{\small$\sum_{j\in I_\nu} $}
  \abs{j}^{\frac{\abs{m}+2}{2}}\abs{g_j}\norm{u_j}\\
  &\leq C_m(1+u)^{\frac{t}{2}}(\abs{\nu}+1)^2\big(\text{\small$\sum_{j\in I_\nu} $}
  \abs{j}^{\abs{m}+4}\abs{g_j}^2\norm{u_j}^2\big)^{\frac{1}{2}}\\
  &\leq C_m\big(\text{\small$\sum_{j\in I_\nu} $}
  \abs{1+u+2j^2}^{\frac{\abs{m}+4}{2}+t+2}\abs{g_j}^2\norm{u_j}^2\big)^{\frac{1}{2}}\\
  &\overset{\text{(4)}}{\leq}\text{\small$C_m\norm{g}_{\frac{\abs{m}+8}{2}+t}$}
\end{align*}
$(2)$ is a direct consequence of Proposition \ref{po:1}; in $(3)$ we use \eqref{for:18}; in $(1)$ and $(4)$ we use \eqref{for:9}. Hence we finish the proof.
\end{proof}
In fact, these $\text{\small$\mathcal{D}_{\nu,n}^{\delta,m}(g)$}$ are $(X-m)$-invariant distributions.
\begin{proposition}\label{po:2}
If there is $f\in \mathcal{H}^{\frac{\abs{m}+10}{2}}_\nu$ such that $(X+m)f=g$, then for $n\in \mathcal{S}^\delta_\nu$, $\text{\small$\mathcal{D}_{\nu,n}^{\delta,m}(g)$}=0$.
\end{proposition}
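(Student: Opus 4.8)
The plan is to reduce the assertion, via identity~\eqref{for:3}, to the elementary \emph{observation} recorded just after Definition~\ref{de:1}: if $(X+m)h=w$ with $w=\sum_{k\in\mathcal{S}_\nu}w_ku_k$ supported on $\mathcal{S}_\nu$ and $h$ is $\Theta$-finite, then $h=w=0$. Concretely, I will show that the ``explicit solution'' $F:=\sum_{j\in I_\nu\setminus\mathcal{S}^\delta_\nu}g_jf_{\{j\}}$ differs from $f$ by a $\Theta$-finite vector; the observation then kills the obstructions at one stroke.

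First the setup. Since $f\in\mathcal{H}^{\frac{\abs{m}+10}{2}}_\nu$ and $X$ lowers the Sobolev order by at most one, $g=(X+m)f\in\mathcal{H}^{\frac{\abs{m}+8}{2}}_\nu$, so each $\mathcal{D}_{\nu,n}^{\delta,m}(g)$ is defined, and by Proposition~\ref{le:11} together with Cauchy--Schwarz (exactly as in the proof of Lemma~\ref{le:3}, which converges because $\frac{\abs{m}+10}{2}>\frac{\abs{m}+6}{2}$) the series $F$ converges in $\mathcal{H}_\nu$. By \eqref{for:3}, $(X+m)F=g+w$, where $w:=\sum_{n\in\mathcal{S}^\delta_\nu}\mathcal{D}_{\nu,n}^{\delta,m}(g)\,u_n$.

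The heart of the argument is that $r:=F-f$ is $\Theta$-finite. Expand $g_j$ as its Fourier coefficient $[(X+m)f]_j=\sum_{\ell\in I_\nu}f_\ell\,[(X+m)u_\ell]_j$ using Lemma~\ref{le:7}, and interchange the two (absolutely convergent) summations to get $F=\sum_{\ell}f_\ell\,w_\ell$, where $w_\ell:=\sum_{j\in I_\nu\setminus\mathcal{S}^\delta_\nu}[(X+m)u_\ell]_j\,f_{\{j\}}$ is the combination of $f_{\{\ell-2\}},f_{\{\ell\}},f_{\{\ell+2\}}$ with the same coefficients that Lemma~\ref{le:7} attaches to $u_{\ell-2},u_\ell,u_{\ell+2}$. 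For every $\ell$ with $\{\ell-2,\ell,\ell+2\}\cap\mathcal{S}^\delta_\nu=\emptyset$ I claim $w_\ell=u_\ell$: the vector $w_\ell-u_\ell$ is $\Theta$-finite, and applying $X+m$ and substituting $(X+m)f_{\{j\}}=\mathfrak{U}_j=u_j+(\text{terms in }\mathcal{S}^\delta_\nu)$ from \eqref{for:43}, the three ``$u_j$''-parts recombine by Lemma~\ref{le:7} precisely into $(X+m)u_\ell$ and cancel, so that $(X+m)(w_\ell-u_\ell)$ is supported on $\mathcal{S}^\delta_\nu$; the observation forces $w_\ell-u_\ell=0$. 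For the finitely many remaining $\ell$ — those with some of $\ell-2,\ell,\ell+2$ in $\mathcal{S}^\delta_\nu$, and for the holomorphic discrete series the lowest weight $\ell=\nu+1$, where the special reading of Lemma~\ref{le:7}/\eqref{for:2} is in force and $b_{\nu+1,\nu-1}=0$ — the vector $w_\ell-u_\ell$ is still a finite combination of basis vectors, hence $\Theta$-finite. Therefore $r=F-f=\sum_\ell f_\ell(w_\ell-u_\ell)$ is a finite sum of $\Theta$-finite vectors, hence $\Theta$-finite.

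Finally, $(X+m)r=(X+m)F-(X+m)f=(g+w)-g=w=\sum_{n\in\mathcal{S}^\delta_\nu}\mathcal{D}_{\nu,n}^{\delta,m}(g)u_n$ is supported on $\mathcal{S}^\delta_\nu$, while $r$ is $\Theta$-finite, so the observation gives $r=0$ and $w=0$; in particular $\mathcal{D}_{\nu,n}^{\delta,m}(g)=0$ for every $n\in\mathcal{S}^\delta_\nu$. The main obstacle is the book-keeping of the finitely many boundary indices $\ell$ in the previous paragraph: one must verify $w_\ell-u_\ell$ is $\Theta$-finite there by a separate check in each series (principal series in both the spherical and non-spherical models, complementary series where $\mathcal{S}^-_\nu=\emptyset$, and the holomorphic discrete series at the lowest weight), since these are exactly the places where the $f_{\{j\}}$ have support meeting $\mathcal{S}^\delta_\nu$ and where \eqref{for:2} is read specially. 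If one prefers to bypass \eqref{for:3}, the same computation gives $\mathcal{D}_{\nu,n}^{\delta,m}((X+m)u_k)=0$ for every $k\in I_\nu$, whence for the truncations $f^{(N)}:=\sum_{\abs{k}\le N}f_ku_k\to f$ in $\mathcal{H}^{\frac{\abs{m}+10}{2}}_\nu$ one has $(X+m)f^{(N)}\to g$ in $\mathcal{H}^{\frac{\abs{m}+8}{2}}_\nu$ and $\mathcal{D}_{\nu,n}^{\delta,m}((X+m)f^{(N)})=\sum_{\abs{k}\le N}f_k\,\mathcal{D}_{\nu,n}^{\delta,m}((X+m)u_k)=0$, so continuity of $\mathcal{D}_{\nu,n}^{\delta,m}$ on $\mathcal{H}^{\frac{\abs{m}+8}{2}}_\nu$ (Lemma~\ref{le:3}) yields $\mathcal{D}_{\nu,n}^{\delta,m}(g)=0$.
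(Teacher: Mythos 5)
Your proposal is correct, but its main route is genuinely different from the paper's, while your closing ``bypass'' paragraph is in fact precisely the paper's own proof: the paper first shows $\mathcal{D}_{\nu,n}^{\delta,m}\big((X+m)u_k\big)=0$ for every $k\in I_\nu$ (by the same observation-based cancellation you use, packaged through Lemma \ref{le:19} applied to the $\Theta$-finite datum $(X+m)u_k+\sum_{n\in\mathcal{S}^\delta_\nu}\mathcal{D}_{\nu,n}^{\delta,m}(\cdot)u_n$), and then passes to a general $f\in\mathcal{H}^{\frac{\abs{m}+10}{2}}_\nu$ via the truncations $\mathfrak{g}_\ell=(X+m)\big(\sum_{\abs{k}\le\ell}f_ku_k\big)$, using $\norm{\mathfrak{g}_\ell-g}_{\frac{\abs{m}+8}{2}}\to0$ and the continuity of $\mathcal{D}_{\nu,n}^{\delta,m}$ on $\mathcal{H}^{\frac{\abs{m}+8}{2}}_\nu$ from Lemma \ref{le:3}. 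Your primary argument instead compares $f$ directly with the constructed solution $F=\sum_{j\in I_\nu\setminus\mathcal{S}^\delta_\nu}g_jf_{\{j\}}$, proving $F-f$ is $\Theta$-finite through the rearrangement $F=\sum_\ell f_\ell w_\ell$ and the identity $w_\ell=u_\ell$ away from finitely many indices, and then kills $w=\sum_n\mathcal{D}_{\nu,n}^{\delta,m}(g)u_n$ in one stroke with the observation. This buys a stronger conclusion (it identifies the $L^2$ solution with the explicit one, $f=F$), but it costs extra analytic care that the paper deliberately avoids: you apply $X+m$ term by term to an infinite series (i.e.\ you use \eqref{for:3} for $g$ merely in $\mathcal{H}^{\frac{\abs{m}+8}{2}}_\nu$, which must be read coefficientwise) and you interchange a double summation; both steps are legitimate under your hypothesis, since Proposition \ref{le:11} gives summands of size $O(\abs{\ell}^{-5/2})$ once $f\in\mathcal{H}^{\frac{\abs{m}+10}{2}}_\nu$, but these estimates should be written out, whereas the paper's truncation-plus-continuity scheme never applies the unbounded operator to an infinite sum. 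Your boundary-index bookkeeping is, as you note, harmless, because at those finitely many $\ell$ only $\Theta$-finiteness of $w_\ell-u_\ell$ is needed, and that is automatic.
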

We postpone the proof to Section \ref{sec:3}.

\subsection{Sobolev estimates of the solution}\label{sec:5} We are now ready to give the explicit solution of the twisted equation.
\begin{lemma}\label{le:19}
Suppose $g\in (\mathcal{H}^\delta_\nu)^{s}$ with $s>\frac{\abs{m}+8}{2}$. If $\mathcal{D}_{\nu,n}^{\delta,m}(g)=0$ for any $n\in \mathcal{S}^\delta_\nu$, then the equation \eqref{for:105}
has a solution $f\in (\mathcal{H}^\delta_\nu)^{s-\frac{\abs{m}}{2}-3}$ with estimates
\begin{align}\label{for:20}
\norm{f}_t\leq C_m \norm{g}_{t+\frac{\abs{m}}{2}+3},\qquad t\leq s-\text{\small$\frac{\abs{m}}{2}$}-3.
\end{align}
Furthermore, we have
\begin{align}\label{for:21}
 &\norm{f|_n}_t\leq\left\{\begin{aligned}&C_{m}\norm{\Theta^{\frac{\abs{m}+5}{2}}(g|_{n-2})}_{t+\frac{1}{2}}; &\quad &\text{if }n\leq0\\
&C_{m}\norm{\Theta^{\frac{\abs{m}+5}{2}}(g|_{n+2})}_{t+\frac{1}{2}},&\quad& \text{if }n> 0,
\end{aligned}
 \right.
\end{align}
for any $t\leq s-\text{\small$\frac{\abs{m}}{2}$}-3$ (see \eqref{for:107}).

\end{lemma}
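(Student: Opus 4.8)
The plan is to write the solution explicitly from the basic solutions $f_{\{n\}}$ and then convert the pointwise coefficient bound of Proposition~\ref{po:1} into a Sobolev estimate by a summation argument. By the identity \eqref{for:3},
\[
(X+m)\Big(\sum_{n\in I_\nu\setminus\mathcal{S}^\delta_\nu}g_nf_{\{n\}}\Big)=g+\sum_{n\in\mathcal{S}^\delta_\nu}\mathcal{D}_{\nu,n}^{\delta,m}(g)\,u_n ,
\]
so the hypothesis $\mathcal{D}_{\nu,n}^{\delta,m}(g)=0$ for every $n\in\mathcal{S}^\delta_\nu$ says precisely that $f:=\sum_{n\in I_\nu\setminus\mathcal{S}^\delta_\nu}g_nf_{\{n\}}$ is a formal solution of \eqref{for:105}. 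First I would record the structure of this sum. Writing $f=F^++F^-$ with $F^{\pm}=\sum_{\pm n>0,\ n\in I_\nu\setminus\mathcal{S}^\delta_\nu}g_nf_{\{n\}}$, formulas \eqref{for:44} and \eqref{for:19} show that $F^+$ is carried by indices $k\in I_\nu$ with $k>0$ and $F^-$ by indices $k\le 0$, so the two have disjoint supports, and that the $u_k$-coefficient of $F^+$ is $F^+_k=\sum_{n>k}g_nb_{n,k}$, the sum running over $n\in I_\nu\setminus\mathcal{S}^\delta_\nu$ in the arithmetic progression through $k$; the coefficients of $F^-$ are described symmetrically. Once the estimates below give $\norm{f}_t<\infty$ for $t\le s-\frac{\abs m}{2}-3$, we conclude $f\in(\mathcal{H}^\delta_\nu)^{s-\frac{\abs m}{2}-3}$, and $(X+m)f=g$ by checking the identity coefficientwise against \eqref{for:2}.

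For the estimate \eqref{for:20} it suffices to bound $\norm{F^+}_t$, since $\norm{F^-}_t$ is symmetric. Writing $a_n=\abs{g_n}\,\norm{u_n}$, Proposition~\ref{po:1} gives
\[
\abs{F^+_k}\,\norm{u_k}\le\sum_{n>k}\abs{b_{n,k}}\,\frac{\norm{u_k}}{\norm{u_n}}\,a_n\le C_m\sum_{n>k}\abs{n}^{\frac{\abs m+2}{2}}\,a_n .
\]
Since $0<k<n$ and $t\ge0$ one may replace $(1+\mu+2k^2)^{t/2}$ by $(1+\mu+2n^2)^{t/2}$, so with $b_n:=(1+\mu+2n^2)^{t/2}\abs{n}^{\frac{\abs m+2}{2}}a_n$ the norm formula \eqref{for:9} gives $\norm{F^+}_t^2\le C_m\sum_k\big(\sum_{n>k}b_n\big)^2$. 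By a discrete Hardy inequality applied inside each fixed arithmetic progression, $\sum_k\big(\sum_{n>k}b_n\big)^2\le C\sum_n\abs{n}^2b_n^2$ with an absolute constant $C$. Since moreover $\abs{n}^2\le 1+\mu+2n^2$ holds uniformly for $n\in I_\nu$ across the three families (because $2-\nu^2+n^2\ge0$ on $I_\nu$ when $\nu\in i\RR\cup(-\mathfrak{u}_0,\mathfrak{u}_0)\cup\ZZ^+$), we obtain
\[
\norm{F^+}_t^2\le C_m\sum_n(1+\mu+2n^2)^{\,t+\frac{\abs m}{2}+2}a_n^2\le C_m\,\norm{g}_{t+\frac{\abs m}{2}+3}^2 .
\]
Adding the analogous bound for $F^-$ gives \eqref{for:20}, and the case $t=s-\frac{\abs m}{2}-3$ gives the asserted membership.

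For the sharper local bound \eqref{for:21} I would rerun the same computation with the $k$-sum restricted to $k\ge n$ when $n>0$, and to $k\le n$ when $n<0$. Since $F^+$ and $F^-$ have disjoint supports, for $n>0$ one has $f|_n=\sum_{k\ge n}F^+_ku_k$, and in $F^+_k=\sum_{j>k}g_jb_{j,k}$ only indices $j\ge k+2\ge n+2$ occur, so only the tail $g|_{n+2}$ enters. The same Hardy step then gives $\norm{f|_n}_t^2\le C_m\sum_{j\ge n+2}(1+\mu+2j^2)^{t}\abs{j}^{\abs m+4}a_j^2$, and using $1+\mu+2j^2\ge\abs{j}^2$ once more, this is at most $C_m\sum_{j\ge n+2}\abs{j}^{\abs m+5}(1+\mu+2j^2)^{t+\frac12}a_j^2=C_m\norm{\Theta^{\frac{\abs m+5}{2}}(g|_{n+2})}_{t+\frac12}^2$, which is \eqref{for:21}; the case $n<0$ is identical via \eqref{for:19}.

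The step I expect to be the main obstacle is the summation argument that turns the pointwise kernel bound of Proposition~\ref{po:1} into a uniform-in-$\nu$ $\ell^2\to\ell^2$ estimate, while keeping exact track of the Sobolev loss so that it never exceeds $\frac{\abs m}{2}+3$. The polynomial growth $\abs n^{(\abs m+2)/2}$ produced by Proposition~\ref{po:1} is precisely what forces that loss, and the bookkeeping has to be arranged so that the constants come out the same for the principal, complementary, and discrete series --- which is where the spectral gap assumption and the uniform estimates of Lemma~\ref{le:10} enter.
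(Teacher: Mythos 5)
Your proposal is correct, and it follows the paper's skeleton: the same formal solution $f=\sum_{n\in I_\nu\setminus\mathcal{S}^\delta_\nu}g_nf_{\{n\}}$ justified by \eqref{for:3}, with the Sobolev loss ultimately traced to the coefficient bound of Proposition \ref{po:1}. Where you diverge is in the summation step that converts that bound into \eqref{for:20}--\eqref{for:21}. The paper never computes the Fourier coefficients of $f$; it estimates $\norm{f|_n}_t$ by the triangle inequality $\sum_k\abs{g_{n+2k}}\,\norm{f_{\{n+2k\}}}_t$, invokes Proposition \ref{le:11} for the Sobolev norm of each whole basic solution, and then trades each single coefficient norm $\norm{g_{n+2k}u_{n+2k}}$ for the weighted tail norm $\norm{\Theta^{\frac{\abs{m}+5}{2}}(g|_{n+2})}_{t+\delta}$ divided by its own weight, which produces the summable factor $(n+2k)^{-1-\delta}$ with $\delta=\tfrac12$; this is essentially an $\ell^1$ argument, and it is what inflates the exponent from $\frac{\abs{m}+3}{2}$ to $\frac{\abs{m}+5}{2}$ and adds the $\tfrac12$. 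You instead work coefficientwise, writing $F^{\pm}_k$ as tail sums $\sum_{j>k}g_jb_{j,k}$ (your support analysis of $F^{\pm}$ via \eqref{for:44}, \eqref{for:19} is right), and close the estimate with the dual Hardy (Copson) inequality $\sum_k\big(\sum_{j\geq k}c_j\big)^2\leq 4\sum_j j^2c_j^2$ applied inside each arithmetic progression, with position index dominated by $\abs{j}$ so the constant stays absolute across the principal, complementary and discrete series. Each route buys something: yours avoids the $\delta$-bookkeeping and actually yields the slightly better loss $\frac{\abs m}{2}+2$ in \eqref{for:20} before you round up to match the statement, while the paper's route gives, with no extra work, the per-coefficient bounds $\norm{f_nu_n}_t$ quoted in Theorem \ref{th:3}\eqref{for:111} and reused verbatim in Section \ref{sec:6} for Theorem \ref{th:7}. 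Two small points you should make explicit if you write this up: the rearrangement defining $F^\pm_k$ needs the absolute convergence your estimates provide, and the passage from ``formal solution with finite $\norm{\cdot}_t$'' to $(X+m)f=g$ should be phrased as the coefficientwise verification of \eqref{for:2} you indicate, which is legitimate since $s-\frac{\abs m}{2}-3\geq 1$.
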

\begin{proof} Let \begin{align}\label{for:48}
  f=\sum_{n\in I_\nu\backslash S_\nu}g_n f_{\{n\}}
\end{align}
(see \eqref{for:44}). From \eqref{for:3} we see that the assumption $\mathcal{D}_{\nu,n}^{\delta,m}(g)=0$ for any $n\in \mathcal{S}^\delta_\nu$ implies that $f$ is a formal solution of the equation \eqref{for:105}.

Furthermore, from \eqref{for:44} we have
\begin{align*}
 &f|_n=\left\{\begin{aligned}&\sum_{\substack{n\in I_\nu\backslash S_\nu,\\k\leq-1}} g_{n+2k}f_{\{n+2k\}}; &\quad &\text{if }n\leq0\\
&\sum_{\substack{n\in I_\nu\backslash S_\nu,\\k\geq1}}g_{n+2k}f_{\{n+2k\}},&\quad& \text{if }n> 0.
\end{aligned}
 \right.
\end{align*}
By Proposition \ref{le:11} for any $n\in\NN$, any $\delta>0$ and $t<s-\frac{\abs{m}+5}{2}$ we have
\begin{align}\label{for:108}
 \norm{f|_n}_t&=\sum_{\substack{n\in I_\nu\backslash S_\nu,\\k\geq1}}|g_{n+2k}| \norm{f_{\{n+2k\}}}_t\notag\\
 &\leq \sum_{\substack{n+2k\in I_\nu,\\k\geq1}}C_m\big(1+u+2(n+2k)^2\big)^\frac{t}{2}(n+2k)^{\frac{\abs{m}+3}{2}}\norm{g_{n+2k}u_{n+2k}}\notag\\
 &\overset{\text{(1)}}{\leq} \sum_{k\geq1}C_{m,\delta}\big(1+u+2(n+2k)^2\big)^\frac{t}{2}(n+2k)^{\frac{\abs{m}+3}{2}}\notag\\
 &\cdot\text{\small$\frac{\norm{\Theta^{\frac{\abs{m}+5}{2}}(g|_{n+2})}_{t+\delta}}{(1+u+2(n+2k)^2)^{\frac{t+\delta}{2}}(n+2k)^{\frac{\abs{m}+5}{2}}}$}\notag\\
 &\leq \sum_{k\geq1}C_{m,\delta}(n+2k)^{-1-\delta}\norm{\Theta^{\frac{\abs{m}+5}{2}}(g|_{n+2})}_{t+\delta}\notag\\
 &\leq C_{m,\delta}\norm{\Theta^{\frac{\abs{m}+5}{2}}(g|_{n+2})}_{t+\delta}.
\end{align}
Here in $(1)$ we use \eqref{for:9}.

Hence we prove \eqref{for:21} for $n>0$. The proof of the case of $n<0$ is similar. Then we get \eqref{for:21}. It is clear that \eqref{for:20} is a direct consequence of \eqref{for:21}.
Hence we finish the proof.
\end{proof}
\subsection{Proof of proposition \ref{po:2}}\label{sec:3}
We need an additional step to get to the proof.
\begin{lemma} We have
\begin{align*}
 \text{\small$\mathcal{D}_{\nu,n}^{\delta,m}\big((X+m)u_k\big)$}=0, \qquad \forall \,k\in I_\nu.
\end{align*}
\end{lemma}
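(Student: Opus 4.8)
The goal is to verify that the linear functional $\mathcal{D}_{\nu,n}^{\delta,m}$, which is defined by an explicit (countably supported) formula on Fourier coefficients, annihilates every element of the form $(X+m)u_k$ with $k\in I_\nu$. Since $\{(X+m)u_k\}$ spans the image of $(X+m)$ restricted to $\Theta$-finite vectors, this is exactly what is needed to conclude (in Proposition \ref{po:2}) that $\mathcal{D}_{\nu,n}^{\delta,m}$ kills genuine coboundaries $g=(X+m)f$ with $f$ smooth enough — after a density/continuity argument handled separately in Section \ref{sec:3}. The plan is to do the computation directly, splitting into the case $k\in\mathcal{S}^\delta_\nu$ and the case $k\in I_\nu\setminus\mathcal{S}^\delta_\nu$.

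\textbf{Step 1: the generic case $k\in I_\nu\setminus\mathcal{S}^\delta_\nu$.} Here I would use the defining relation \eqref{for:3}, namely
\begin{align*}
g+\sum_{n\in\mathcal{S}^\delta_\nu}\mathcal{D}_{\nu,n}^{\delta,m}(g)\,u_n=(X+m)\Big(\sum_{j\in I_\nu\setminus\mathcal{S}^\delta_\nu}g_j f_{\{j\}}\Big),
\end{align*}
applied with $g=(X+m)u_k$. By Lemma \ref{le:7}, $g=(X+m)u_k$ has Fourier coefficients supported on $\{k-2,k,k+2\}$, none of which (for the relevant range of $k$) lies in $\mathcal{S}^\delta_\nu$; so the left-hand side is just $(X+m)u_k$ plus the unknown combination $\sum_n \mathcal{D}_{\nu,n}^{\delta,m}\big((X+m)u_k\big)u_n$. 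On the right-hand side, $\sum_j g_j f_{\{j\}}$ is a finite, hence $\Theta$-finite, combination; call it $h$. Then $(X+m)(h-u_k)=\sum_n \mathcal{D}_{\nu,n}^{\delta,m}\big((X+m)u_k\big)u_n$, whose right side is supported on $\mathcal{S}^\delta_\nu$, and $h-u_k$ is $\Theta$-finite. By the \textbf{observation} recorded after \eqref{for:2} (if $(X+m)v$ is supported on $\mathcal{S}_\nu$ and $v$ is $\Theta$-finite then $v=0$ and hence $(X+m)v=0$), we get that all the coefficients $\mathcal{D}_{\nu,n}^{\delta,m}\big((X+m)u_k\big)$ vanish. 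This is the clean conceptual argument; I expect it to be the main line.

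\textbf{Step 2: the exceptional case $k\in\mathcal{S}^\delta_\nu$.} When $k\in\mathcal{S}^\delta_\nu$ the vector $g=(X+m)u_k$ can have a coefficient landing in $\mathcal{S}^\delta_\nu$, so formula \eqref{for:3}/\eqref{for:44} no longer directly applies (the sum there runs over $I_\nu\setminus\mathcal{S}^\delta_\nu$, and $f_{\{k\}}$ is not defined for $k\in\mathcal{S}^\delta_\nu$). Here I would fall back on the explicit formula for $\mathcal{D}_{\nu,n}^{\delta,m}$ in Section \ref{sec:9}: plug in the three nonzero coefficients of $(X+m)u_k$ from Lemma \ref{le:7} and check the cancellation by hand, using the recursion \eqref{for:2} for the $b_{j,\cdot}$ together with the initial values \eqref{for:12} and \eqref{for:19}. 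The identity $b_{\nu+1,\nu-1}=0$ (noted in Section \ref{sec:9}) and the boundary form of Lemma \ref{le:7} for the discrete-series bottom vector need to be watched in this step.

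\textbf{Main obstacle.} The conceptual Step 1 is short once one notices that $h-u_k$ is $\Theta$-finite and applies the observation; the real bookkeeping is in Step 2, where one must handle each model ($\nu\in i\RR$ spherical/non-spherical, complementary, discrete) and the edge indices $k\in\{0,\pm1,2,\nu\pm1\}$ separately, making sure the recursion coefficients $\frac{k\pm1\mp\nu}{2}$ and the initial data line up so that the three terms of the explicit functional cancel. I expect that to be the only place requiring genuine case-checking, and I would organize it by first reducing, via \eqref{for:19}, to $n\geq 0$ and then treating $n=0,2$ (or $n=\pm1$, or $n=\nu+1$) one at a time.
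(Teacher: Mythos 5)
Your Step 1 is essentially the paper's own proof: the paper sets $g'=(X+m)u_k+\sum_{n\in\mathcal{S}^\delta_\nu}\mathcal{D}_{\nu,n}^{\delta,m}(g)u_n$, produces a $\Theta$-finite solution of $(X+m)f=g'$ (via Lemma \ref{le:19}, whose explicit solution is exactly your $h=\sum_{j\in I_\nu\setminus\mathcal{S}^\delta_\nu}g_jf_{\{j\}}$), and then applies the observation after Definition \ref{de:1} to $f-u_k$. The only divergence is your Step 2, and it is unnecessary: the identity \eqref{for:3} holds for \emph{every} $g\in\mathcal{H}_\nu^{\frac{\abs{m}+8}{2}}$, whether or not $g$ has nonzero coefficients on $\mathcal{S}^\delta_\nu$ --- the term $-g_n$ in the definition of $\mathcal{D}_{\nu,n}^{\delta,m}$ is there precisely to absorb such coefficients, and the sum in \eqref{for:3} only ever uses the basic solutions $f_{\{j\}}$ with $j\notin\mathcal{S}^\delta_\nu$, never $f_{\{k\}}$ itself. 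Hence the argument of your Step 1 applies verbatim when $k\in\mathcal{S}^\delta_\nu$ (including the discrete-series bottom vector), and the case-by-case cancellation you defer to Step 2 --- which you identify as the main obstacle and do not actually carry out --- can simply be deleted; this uniformity is exactly what the paper exploits. One further small inaccuracy: even for $k\notin\mathcal{S}^\delta_\nu$, the coefficients of $(X+m)u_k$ can land on $\mathcal{S}^\delta_\nu$ (e.g.\ $k=4$ produces a component on $u_2$ in the even principal series), so your parenthetical claim is false; but it is also harmless, since the left-hand side of \eqref{for:3} is $g$ plus the unknown combination supported on $\mathcal{S}^\delta_\nu$ in any case, and your conclusion via the $\Theta$-finiteness observation goes through unchanged.
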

\begin{proof}
Set $g=(X+m)u_k$. Let
\begin{align}\label{for:6}
 g'=g+\sum_{n\in \mathcal{S}^\delta_\nu}\text{\small$\mathcal{D}_{\nu,n}^{\delta,m}$}(g)u_n=(X+m)u_k+\sum_{n\in \mathcal{S}^\delta_\nu}\text{\small$\mathcal{D}_{\nu,n}^{\delta,m}$}(g)u_n.
\end{align}
Then $\text{\small$\mathcal{D}_{\nu,n}^{\delta,m}$}(g')=0$, for any $n\in \mathcal{S}^\delta_\nu$. Since $g'$ is $\Theta$-finite (see Definition \ref{de:1}), by Lemma \ref{le:19}, we have $f\in \mathcal{H}^\infty$ such that
\begin{align}\label{for:15}
 (X+m)f=g'
\end{align}
moreover, $f$ is also $\Theta$-finite.

Hence, it follows from \eqref{for:6} and \eqref{for:15} that
\begin{align*}
 \sum_{n\in \mathcal{S}^\delta_\nu}\text{\small$\mathcal{D}_{\nu,n}^{\delta,m}$}(g)u_n=(X+m)(f-u_k).
\end{align*}
Since $f-u_k$ is also $\Theta$-finite, the observation after Definition \ref{de:1} implies that $f=u_k$ and $\sum_{n\in \mathcal{S}^\delta_\nu}\text{\small$\mathcal{D}_{\nu,n}^{\delta,m}$}(g)u_n=0$.
Hence we finish the proof.

\end{proof}
Now we are ready to prove Proposition \ref{po:2}. Write $f=\sum_{k\in I_\nu}f_ku_k$. For any $\ell\in\NN$ let
\begin{align*}
 \mathfrak{g}_\ell=(X+m)(\sum_{k\in I_\nu, \abs{k}\leq \ell}f_ku_k).
\end{align*}
From \eqref{for:9} we see that
\begin{align*}
 \big\|\sum_{k\in I_\nu, \abs{k}> \ell}f_ku_k \big\|_{\frac{\abs{m}+10}{2}}\to 0,\qquad\text{as }\ell\to \infty.
\end{align*}
Hence we have
\begin{align*}
 \norm{\mathfrak{g}_\ell-g}_{\frac{\abs{m}+8}{2}}\to 0, \qquad\text{as }\ell\to \infty.
\end{align*}
Then it follows from Lemma \ref{le:3} that
\begin{align*}
\text{\small$\mathcal{D}_{\nu,n}^{\delta,m}(g)$}=\lim_{\ell\to\infty} \text{\small$\mathcal{D}_{\nu,n}^{\delta,m}(g_\ell)$}=0.
\end{align*}
Hence we finish the proof.

\subsection{Extended distributions and solutions}\label{sec:6}  For any $n\in \ZZ\backslash 0$ with $\abs{n}\geq \abs{\text{Re}(\nu)}+2$ we consider the subspace $\mathfrak{F}_{n,\nu}=\{g\in (\mathcal{H}_\nu)^s: g=g|_{n}\}$, $s\geq\frac{\abs{m}+8}{2}$. For any $g\in \mathfrak{F}_{n,\nu}$ let $\tilde{f}=(\sum_{\ell\in I_\nu} g_\ell f_{\{\ell\}})|_{n}$.  Then it follows from \eqref{for:108} of the proof of Lemma \ref{le:19} immediately that
 \begin{align}\label{for:109}
 \norm{\tilde{f}}_t\leq\norm{g}_{t+\frac{\abs{m}}{2}+3},\qquad t\leq s-\text{\small$\frac{\abs{m}}{2}$}-3.
 \end{align}
It is clear that
\begin{align*}
  (X+m)(\sum_\ell g_\ell f_{\{\ell\}})|_{n}=g-\tilde{g},
\end{align*}
where
\begin{align*}
 &\tilde{g}=\left\{\begin{aligned}&\text{\small$\mathcal{E}_{\nu,n,1}^{\delta,m}(g)$}u_n+\text{\small$\mathcal{E}_{\nu,n,2}^{\delta,m}(g)$}u_{n-2}; &\quad &\text{if }n>0\\
&\text{\small$\mathcal{E}_{\nu,n,1}^{\delta,m}(g)$}u_n+\text{\small$\mathcal{E}_{\nu,n,2}^{\delta,m}(g)$}u_{n+2},&\quad& \text{if }n<0,
\end{aligned}
 \right.
 \end{align*}
and
\begin{align*}
 &\text{\small$\mathcal{E}_{\nu,n,1}^{\delta,m}(g)$}=\left\{\begin{aligned}&\text{\small$-g_n
 +\text{\small$\frac{\nu-n-1}{2}$}\sum_{j\geq n+4} b_{j,n+2}g_j$}+\text{\small$m\sum_{j\geq n+2} b_{j,n}g_j$}; &\quad &\text{if }n>0\\
&\text{\small$-g_n
 +\text{\small$\frac{\nu+n-1}{2}$}\sum_{j\leq n-4} b_{j,n-2}g_j$}+\text{\small$m\sum_{j\leq n-2} b_{j,n}g_j$},&\quad& \text{if }n<0;
\end{aligned}
 \right.
 \end{align*}
and
\begin{align*}
 &\text{\small$\mathcal{E}_{\nu,n, 2}^{\delta,m}(g)$}=\left\{\begin{aligned}&\text{\small$\frac{\nu+n-1}{2}\sum_{j\geq n+2} b_{j,n}g_j$}; &\quad &\text{if }n>0\\
&\text{\small$\frac{\nu-n-1}{2}\sum_{j\leq n-2} b_{j,n}g_j$},&\quad& \text{if }n<0.
\end{aligned}
 \right.
 \end{align*}
From \eqref{for:109} we have
\begin{align*}
\norm{\tilde{g}}_t\leq \norm{g}_t+\norm{(X+m)\tilde{f}}_t\leq \norm{g}_t+C_m\norm{\tilde{f}}_{t+1}\leq C_m\norm{g}_{t+\frac{\abs{m}}{2}+4}
\end{align*}
if $t\leq s-\text{\small$\frac{\abs{m}}{2}$}-4$.
\section{Proof of Theorem \ref{th:1}, \ref{th:3} and \ref{th:7}}
\emph{Proof of Theorem \ref{th:3}}: Let
\begin{align*}
 g_1=g+\sum_{n\in \mathcal{S}^\delta_\nu}\mathcal{D}_{\nu,n}^{\delta,m}(g)u_n.
\end{align*}
 We see that $\mathcal{D}_{\nu,n}^{\delta,m}(g_1)=0$ for any $n\in \mathcal{S}^\delta_\nu$. Then by Lemma \ref{le:19},
the equation
\begin{align*}
 (X+m)f=g_1
\end{align*}
has a solution $f\in \mathcal{H}_\nu$, where $f=\sum_{n\in I_\nu\backslash S_\nu}g_n f_{\{n\}}$. Hence \eqref{for:111} follows from Lemma \ref{le:19}. \eqref{for:112} is a direct consequence of
Proposition \ref{po:2}. \eqref{for:113} is Lemma \ref{le:3}.

\medskip
\emph{Proof of Theorem \ref{th:7}}: It follows directly from Section \ref{sec:6}.

\medskip
\emph{Proof of Theorem \ref{th:1}}: \eqref{for:17} is a direct consequence of Lemma \ref{le:4}.

\smallskip
 We consider the decomposition of $\pi$ as in \eqref{for:1} and the Sobolev spaces decomposition as in \eqref{for:51}. We write $g=\int_{\oplus}g_\mu dS(\mu)$.

 \eqref{for:22}: Arguments in Section \ref{sec:17} allow us to apply
\eqref{for:111} of Theorem \ref{th:3} to each $g_\mu$. Hence the equation
\begin{align*}
 (X+m)f_\mu=g_\mu
\end{align*}
has a solution $f_\mu$ with estimates
\begin{align*}
\norm{f_\mu}_t\leq C_m \norm{g_\mu}_{t+\frac{\abs{m}}{2}+3}
\end{align*}
if $t\leq s-\text{\small$\frac{\abs{m}}{2}$}-3$.

Let $f=\int_{\oplus}f_\mu dS(\mu)$. Then
\begin{align}\label{for:24}
\norm{f}^2_t&=\int_{\oplus}\ell(\mu)\norm{f_\mu}^2 dS(\mu)\leq C_m \int_{\oplus}\ell(\mu)\norm{g_\mu}^2_{t+\frac{\abs{m}}{2}+3}dS(\mu)\notag\\
&=\norm{g}^2_{t+\frac{\abs{m}}{2}+3}
\end{align}
if $t\leq s-\text{\small$\frac{\abs{m}}{2}$}-3$. Hence we finish the proof.

\smallskip
\eqref{for:23}: By above arguments, we write $f=\int_{\oplus}f_\mu dS(\mu)$. The assumption implies that $f_\mu\in \mathcal{H}^{\frac{\abs{m}+10}{2}}_\mu$ for almost all $\mu$. Then it follows from \eqref{for:112} and \eqref{for:111} of Theorem \ref{th:3} that $f_\mu\in \mathcal{H}_\mu^{s-\frac{\abs{m}}{2}-3}$ with the estimate
\begin{align*}
\norm{f_\mu}_{s-\frac{\abs{m}}{2}-3}\leq C_m \norm{g_\mu}_{s}
\end{align*}
for almost all $\mu$.

Following the same way as in \eqref{for:24}, we have
\begin{align*}
\norm{f}_{s-\frac{\abs{m}}{2}-3}\leq C_m \norm{g}_{s}.
\end{align*}
Hence we finish the proof.

\end{document}